\DeclareRobustCommand{\rvdots}{%
  \vbox{
    \baselineskip4\p@\lineskiplimit\z@
    \kern-\p@
    \hbox{.}\hbox{.}\hbox{.}
  }}
\newcommand{\Mod}[1]{\ (\textup{mod}\ #1)}
\def\moverlay{\mathpalette\mov@rlay}
\def\mov@rlay#1#2{\leavevmode\vtop{%
   \baselineskip\z@skip \lineskiplimit-\maxdimen
   \ialign{\hfil$\m@th#1##$\hfil\cr#2\crcr}}}
\newcommand{\charfusion}[3][\mathord]{
    #1{\ifx#1\mathop\vphantom{#2}\fi
        \mathpalette\mov@rlay{#2\cr#3}
      }
    \ifx#1\mathop\expandafter\displaylimits\fi}
\newcommand{\cupdot}{\charfusion[\mathbin]{\cup}{\cdot}}
\theoremstyle{plain} 
\newtheorem{theorem}{\indent\sc Theorem}[section]
\newtheorem{lemma}[theorem]{\indent\sc Lemma}
\newtheorem{proposition}[theorem]{\indent\sc Proposition}
\theoremstyle{definition} 
\newtheorem{remark}[theorem]{\indent\sc Remark}
\newtheorem{example}[theorem]{\indent\sc Example}
\def\address#1#2{\begingroup
\noindent\parbox[t]{7.8cm}{%
\small{\scshape\ignorespaces#1}\par\vskip1ex
\noindent\small{\itshape E-mail address}%
\/: #2\par\vskip4ex}\hfill%
\endgroup}%
\title{A simplified algorithmic realization of Galois actions
on special values of modular functions}
\author{
\textsc{Ja Kyung Koo, Dong Hwa Shin and Dong Sung Yoon$^*$} 
}
\date{} 
\begin{document}

\allowdisplaybreaks

\maketitle

\begin{abstract}
We propose an explicit and practical algorithm for computing Galois conjugates and irreducible polynomials
for special values of modular functions evaluated at CM points associated with imaginary quadratic orders.
Our approach builds upon the theory of extended form class groups developed by Jung et al., 
offering a refinement of earlier methods by Stevenhagen and Cho, respectively. 
\end{abstract}

\tableofcontents

\footnote{ 
2020 \textit{Mathematics Subject Classification}. Primary 11R37; Secondary 11E12, 11F03, 11R65.}
\footnote{ 
\textit{Key words and phrases}. Class field theory, form class groups, ideal class groups,
modular functions.} \footnote{$^*$Corresponding author.\\
\thanks{The second named author was supported
by Hankuk University of Foreign Studies Research Fund of 2025 and by the National Research Foundation of Korea (NRF) grant funded by the Korea government (MSIT)
(No. RS-2023-00241953). The third (corresponding) named author was supported
by the National Research Foundation of Korea (NRF) grant funded by the Korea government (MSIT)
(No. RS-2024-00342522). 
}
}

\section {Introduction}

Let $\mathbb{H}=\{\tau\in\mathbb{C}~|~\mathrm{Im}(\tau)>0\}$ denote the complex upper half-plane. 
In \cite{Duke}, Duke interpreted the Rogers-Ramanujan continued fraction
\begin{equation*}
r(\tau)=\cfrac{q^{1/5}}{1+\cfrac{q}{1+\cfrac{q^2}{1+\cfrac{q^3}{1+\cdots\phantom{\Bigg|}}}}}\qquad(\tau\in\mathbb{H},~q=e^{2\pi\mathrm{i}\tau})
\end{equation*}
as a meromorphic modular function, specifically a Hauptmodul for the principal congruence subgroup $\Gamma(5)$. 
He also proved that $x=r(\tau)$ ($\tau\in\mathbb{H}$) satisfies the icosahedral equation 
\begin{equation}\label{icosahedral}
(x^{20}-228x^{15}+494x^{10}+228x^5+1)^3+j(\tau)x^5(x^{10}+11x^5-1)^5=0,
\end{equation}
where $j=q^{-1}+744+196884q+21493760q^2+\cdots$ is the classical elliptic modular function. 
It follows that a special value $r(\tau)$ can be expressed in terms of radicals over $\mathbb{Q}$ if and only if $j(\tau)$ can be, 
and the icosahedral equation is reducible over $\mathbb{Q}(\zeta_5,\,j(\tau))$ where $\zeta_5=e^{2\pi\mathrm{i}/5}$ (\cite[Theorem 1]{Duke}). 
In particular, if $\tau$ is imaginary quadratic, then  
$r(\tau)$ is a unit that can be expressed in terms of radicals over $\mathbb{Q}$ (\cite[Theorem 2]{Duke}). 
See \cite{C-C} for further discussion.
\par
Inspired by \cite{Duke}, this paper is concerned with the special values of meromorphic modular functions. 
Let $K$ be an imaginary quadratic field with ring of integers $\mathcal{O}_K$,
and let $E$ be an elliptic curve with $j$-invariant $j_E=j(\mathcal{O}_K)$, defined over $\mathbb{Q}(j_E)$. 
Let $N$ be a positive integer. 
According to the theory of complex multiplication, primarily established by Artin, Hasse and Hecke, 
the ray class field $K_{(N)}$ of $K$ modulo $N\mathcal{O}_K$ is generated by 
the values of $j_E$ and $h_E(E[N])$, where $h_E$ denotes the Weber function associated with $E$
and $E[N]$ is the group of $N$-torsion points on $E$ (see \cite[Theorem 2 in Chapter 10]{Lang}). 
Furthermore, by the work of Shimura \cite[Chapter 6]{Shimura}, one obtains
\begin{equation}\label{K_N}
K_{(N)}=K(f(\tau_K)~|~f\in\mathcal{F}_N~\textrm{is finite at}~\tau_K), 
\end{equation}
where $\tau_K\in\mathbb{H}$
satisfies $\mathcal{O}_K=\mathbb{Z}\tau_K+\mathbb{Z}$, and $\mathcal{F}_N$ is the field of meromorphic modular functions of level $N$ whose Fourier 
expansions with respect to $q^{1/N}$ have coefficients in the $N$th cyclotomic field 
(cf. \cite[Corollary to Theorem 2 in Chapter 10]{Lang}).  
\par
On the other hand, more generally, for an order $\mathcal{O}$ in $K$ with discriminant $D$
we consider
the associated ray class field $K_{\mathcal{O},\,N}$ modulo $N\mathcal{O}$,
which was introduced by S\"{o}hngen \cite{Sohngen} and
later reformulated by Stevenhagen \cite{Stevenhagen} using the language of adeles and ideles. 
As shown in \cite[Theorem 4]{Cho}, we have 
\begin{equation*}
K_{\mathcal{O},\,N}=K(f(\tau_\mathcal{O})~|~f\in\mathcal{F}_N~\textrm{is finite at}~\tau_\mathcal{O}), 
\end{equation*}
where $\tau_\mathcal{O}$ is defined analogously to $\tau_K$\,; see (\ref{tau_O}). Although Cho \cite[Theorem 8]{Cho} provided
an explicit description of all Galois conjugates of $f(\tau_\mathcal{O})$ over $K$ by 
using ideas from \cite[Sections 3, 4, 6]{Stevenhagen} (cf. \cite[Theorems 15.22 and 15.27]{Cox}), 
the construction remains unwieldy in practice. 
\par 
The purpose of this paper is to refine the approaches of Stevenhagen and Cho, and 
to present a simplified and practical algorithm for computing 
all Galois conjugates of $f(\tau_\mathcal{O})$, as well as its irreducible polynomial over $\mathbb{Q}$
(Theorems \ref{main} and \ref{algorithm}). 
To this end, we utilize the extended form class group of discriminant $D$ and level $N$,
recently developed by Jung-Koo-Shin-Yoon \cite{J-K-S-Y23}. 
This group is isomorphic to the ray class group of order $\mathcal{O}$ modulo $N\mathcal{O}$,
generalizing Gauss's form class group $\mathcal{C}(D)$ (see \cite[Sections 3 and 7]{Cox} and 
Sections \ref{sect3} and \ref{sect4} of this paper). 

\section {Ray class fields of orders}\label{sect2}

Throughout this paper, we set\,:
\begin{itemize}
\item $D$ : a negative integer such that $D\equiv0$ or $1\Mod{4}$,
\item $K$ : the imaginary quadratic field $\mathbb{Q}(\sqrt{D})$,
\item $\mathcal{O}_K$ : the ring of integers of $K$,
\item $\mathcal{O}$ : the order in $K$ with discriminant $D$,
\item $\ell_\mathcal{O}$ : the conductor of $\mathcal{O}$, i.e., $\ell_\mathcal{O}=[\mathcal{O}_K:\mathcal{O}]$, 
\item $N$ : a positive integer. 
\end{itemize}
In this preliminary section, we shall describe the ray class field of order $\mathcal{O}$ modulo $N\mathcal{O}$,
following the perspective of \cite[Chapter 2]{Cox} 
(see also \cite[Chapter 6]{Schertz}, and \cite{Stevenhagen} for an adelic reformulation). 
\par
For each positive integer $m$, 
define the following two subgroups of proper fractional $\mathcal{O}$-ideals\,:
\begin{align*}
I(\mathcal{O},\,m)&=\langle\textrm{nonzero proper $\mathcal{O}$-ideals prime to $m$}\rangle,\\
P_N(\mathcal{O},\,m)&=
\langle\nu\mathcal{O}~|~\textrm{$\nu$ is a nonzero element of $\mathcal{O}$ such that}~
~\nu\mathcal{O}~\textrm{is prime to $m$ and}~
\nu\equiv1\Mod{N\mathcal{O}}\rangle,
\end{align*}
and let
\begin{equation*}
\mathcal{C}_N(\mathcal{O},\,m)=I(\mathcal{O},\,m)/P_N(\mathcal{O},\,m)
\end{equation*}
be the associated quotient group. 
Then we obtain an isomorphism 
\begin{equation}\label{CIP}
\mathcal{C}_N(\mathcal{O},\,\ell_\mathcal{O}N)\stackrel{\sim}{\rightarrow}
I(\mathcal{O}_K,\,\ell_\mathcal{O}N)/P_{\mathbb{Z},\,N}(\mathcal{O}_K,\,\ell_\mathcal{O}N),
\quad[\mathfrak{a}]\mapsto[\mathfrak{a}\mathcal{O}_K],
\end{equation}
where
\begin{equation*}
P_{\mathbb{Z},\,N}(\mathcal{O}_K,\,\ell_\mathcal{O}N)=\left\langle
\nu\mathcal{O}_K~\Bigg|~
\begin{array}{l}
\textrm{$\nu$ is a nonzero element of $\mathcal{O}_K$ satisfying that $\nu\mathcal{O}_K$ is prime to $\ell_\mathcal{O}N$}\\
\textrm{and}~\nu\equiv a\Mod{\ell_\mathcal{O}N\mathcal{O}_K}~\textrm{for some}~a\in\mathbb{Z}~
\textrm{such that}~a\equiv1\Mod{N}
\end{array}
\right\rangle 
\end{equation*}
(\cite[Proposition 2.8]{J-K-S-Y23}). 
We simply write $\mathcal{C}_N(\mathcal{O})$ for $\mathcal{C}_N(\mathcal{O},\,N)$, 
and refer to it as
the \textit{ray class group of $\mathcal{O}$ modulo $N\mathcal{O}$}. 
Since the inclusion $I(\mathcal{O},\,\ell_\mathcal{O}N)\hookrightarrow
I(\mathcal{O},\,N)$ induces an isomorphism
\begin{equation*}
\mathcal{C}_N(\mathcal{O},\,\ell_\mathcal{O}N)\stackrel{\sim}{\rightarrow}\mathcal{C}_N(\mathcal{O})
\end{equation*}
(\cite[Proposition 2.13]{J-K-S-Y23}),  via the isomorphism in (\ref{CIP}), we attain the isomorphism 
\begin{equation}\label{CaIP}
\mathcal{C}_N(\mathcal{O})\stackrel{\sim}{\rightarrow}
I(\mathcal{O}_K,\,\ell_\mathcal{O}N)/P_{\mathbb{Z},\,N}(\mathcal{O}_K,\,\ell_\mathcal{O}N).
\end{equation}
Therefore, by the existence theorem of class field theory
(see \cite[Theorem 8.6]{Cox} or \cite[Section V.7]{Janusz}), 
there is a unique abelian extension of $K_{\mathcal{O},\,N}$ of $K$ satisfying\,:
\begin{enumerate}
\item[(i)] Every nonzero prime ideal of $\mathcal{O}_K$ that ramifies in $K_{\mathcal{O},\,N}$
divides $\ell_\mathcal{O}N\mathcal{O}_K$,
\item[(ii)] The isomorphism in (\ref{CaIP}),
together with the Artin reciprocity map for the modulus $\ell_\mathcal{O}N\mathcal{O}_K$, induces
an isomorphism
\begin{equation*}
\mathcal{C}_N(\mathcal{O})
\stackrel{\sim}{\rightarrow}\mathrm{Gal}(K_{\mathcal{O},\,N}/K). 
\end{equation*}
\end{enumerate}
The field $K_{\mathcal{O},\,N}$ is called the 
\textit{ray class field of order $\mathcal{O}$ modulo $N\mathcal{O}$}. 

\section {Extended form class groups}\label{sect3}

Generalizing Gauss's form class group of discriminant $D$, 
Jung et al. \cite{J-K-S-Y23} constructed the extended form class group $\mathcal{C}_N(D)$
of discriminant $D$ and level $N$.
We shall briefly introduce $\mathcal{C}_N(D)$ as a group isomorphic to the ray class group $\mathcal{C}_N(\mathcal{O})$
of order $\mathcal{O}$ modulo $N\mathcal{O}$.  
\par
Let $\mathcal{Q}(D,\,N)$ denote the set of primitive positive definite binary quadratic forms over $\mathbb{Z}$ of discriminant $D$
satisfying an additional condition\,:
\begin{equation*}
\mathcal{Q}(D,\,N)=\left\{
Q(x,\,y)=Q\left(\begin{bmatrix}x\\y\end{bmatrix}\right)=ax^2+bxy+cy^2\in\mathbb{Z}[x,\,y]~\Bigg|~
\begin{array}{l}
\gcd(a,\,b,\,c)=1,\\
b^2-4ac=D,~a>0,\\
\gcd(a,\,N)=1
\end{array}
\right\}.
\end{equation*}
The congruence subgroup 	
\begin{equation*}
\Gamma_1(N)=\left\{\gamma\in\mathrm{SL}_2(\mathbb{Z})~|~
\gamma\equiv\begin{bmatrix}1&\mathrm{*}\\0&1\end{bmatrix}\Mod{NM_2(\mathbb{Z})}
\right\}
\end{equation*}
has a right action on the set $\mathcal{Q}(D,\,N)$, which induces the equivalence relation 
$\sim_{\Gamma_1(N)}$ defined as follows\,: for $Q,\,Q'\in\mathcal{Q}(D,\,N)$,
\begin{equation*}
Q\sim_{\Gamma_1(N)}Q'\quad\Longleftrightarrow\quad
Q'\left(\begin{bmatrix}x\\y\end{bmatrix}\right)=
Q\left(\begin{bmatrix}x\\y\end{bmatrix}\right)^\gamma=Q\left(\gamma\begin{bmatrix}x\\y\end{bmatrix}\right)~\textrm{for some}~\gamma\in\Gamma_1(N). 
\end{equation*}
Let 
\begin{equation*}
\mathcal{C}_N(D)=\mathcal{Q}(D,\,N)/\sim_{\Gamma_1(N)}
\end{equation*}
be the set of equivalence classes. 
For each $Q(x,\,y)=ax^2+bxy+cy^2\in\mathcal{Q}(D,\,N)$, let $\omega_Q$ be the root 
of the quadratic polynomial $Q(x,\,1)=ax^2+bx+c$ that lies in the complex upper half-plane
$\mathbb{H}$, namely,
\begin{equation*}
\omega_Q=\frac{-b+\sqrt{D}}{2a}.
\end{equation*}

\begin{proposition}\label{CDCO}
The set $\mathcal{C}_N(D)$ can be endowed with a group structure such that the map
\begin{equation*}
\mathcal{C}_N(D)\rightarrow\mathcal{C}_N(\mathcal{O}),\quad [Q]\mapsto[\mathbb{Z}\omega_Q+\mathbb{Z}]
\end{equation*}
is an isomorphism. Thus, the group $\mathcal{C}_N(D)$ is isomorphic to $\mathrm{Gal}(K_{\mathcal{O},\,N}/K)$. 
\end{proposition}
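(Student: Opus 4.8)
The plan is to derive the asserted isomorphism from the classical dictionary between positive definite binary quadratic forms of discriminant $D$ and proper $\mathcal{O}$-ideals (\cite[Chapter 7]{Cox}), refined so as to record the level-$N$ datum carried by the distinguished element $1\in\mathbb{Z}\omega_Q+\mathbb{Z}$, and then to transport the group law of $\mathcal{C}_N(\mathcal{O})$ across the resulting bijection $\Phi\colon[Q]\mapsto[\mathbb{Z}\omega_Q+\mathbb{Z}]$; the isomorphism with $\mathrm{Gal}(K_{\mathcal{O},N}/K)$ is then immediate from \S\ref{sect2}.

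First I would check that $\Phi$ is well-defined. For $Q=ax^2+bxy+cy^2\in\mathcal{Q}(D,N)$ the sublattice $a(\mathbb{Z}\omega_Q+\mathbb{Z})=\mathbb{Z}a+\mathbb{Z}\tfrac{-b+\sqrt{D}}{2}$ is the usual proper $\mathcal{O}$-ideal of norm $a$; since $\gcd(a,N)=1$, both it and $a\mathcal{O}$ are prime to $N$, so $\mathbb{Z}\omega_Q+\mathbb{Z}\in I(\mathcal{O},N)$ and its class in $\mathcal{C}_N(\mathcal{O})$ is defined. If $Q'=Q\circ\gamma$ with $\gamma=\begin{bmatrix}p&q\\r&s\end{bmatrix}\in\Gamma_1(N)$, then $\omega_{Q'}=\gamma^{-1}\cdot\omega_Q=\frac{s\omega_Q-q}{-r\omega_Q+p}$, and because $sp-qr=1$ the elements $\{s\omega_Q-q,\,-r\omega_Q+p\}$ form another $\mathbb{Z}$-basis of $\mathbb{Z}\omega_Q+\mathbb{Z}$; hence $\mathbb{Z}\omega_{Q'}+\mathbb{Z}=\mu(\mathbb{Z}\omega_Q+\mathbb{Z})$ with $\mu=(p-r\omega_Q)^{-1}$. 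Using $a\omega_Q=\tfrac{-b+\sqrt{D}}{2}\in\mathcal{O}$ together with $r\equiv0$, $p\equiv1\Mod{N}$, one sees that $a(p-r\omega_Q)\in\mathcal{O}$ and $a(p-r\omega_Q)\equiv a\Mod{N\mathcal{O}}$; since $\gcd(a,N)=1$ this forces $\mu\mathcal{O}\in P_N(\mathcal{O},N)$, so $\Phi([Q])$ does not depend on the chosen representative.

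For bijectivity I would proceed as follows. A proper fractional $\mathcal{O}$-ideal $L$ that is prime to $N$ and contains $1$ is precisely one of the form $\mathbb{Z}\omega+\mathbb{Z}$ with $\omega\in\mathbb{H}$ a root of a form in $\mathcal{Q}(D,N)$: writing $L=\mathbb{Z}\omega+\mathbb{Z}$ with $\mathrm{Im}(\omega)>0$, properness forces $\omega$ to be a root of a primitive form of discriminant exactly $D$, and $\gcd(a,N)=1$ is equivalent to $L$ being prime to $N$. Consequently the image of $\Phi$ consists of the ray classes $[L]$ with $L^{-1}$ an integral proper $\mathcal{O}$-ideal prime to $N$, which is all of $\mathcal{C}_N(\mathcal{O})$, since every class is represented by such an integral ideal (use the isomorphisms of \S\ref{sect2} to pass to ideals prime to $\ell_\mathcal{O}N$, then invert). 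For injectivity, if $[\mathbb{Z}\omega_{Q'}+\mathbb{Z}]=[\mathbb{Z}\omega_Q+\mathbb{Z}]$ then $\mathbb{Z}\omega_{Q'}+\mathbb{Z}=\mu(\mathbb{Z}\omega_Q+\mathbb{Z})$ with $\mu\mathcal{O}\in P_N(\mathcal{O},N)$; writing the change of $\mathbb{Z}$-basis as $\gamma'=\begin{bmatrix}a'&b'\\c'&d'\end{bmatrix}\in\mathrm{GL}_2(\mathbb{Z})$, positive definiteness forces $\det\gamma'=1$, and reading $\mu^{-1}=c'\omega_Q+d'$ modulo $N$ through the explicit generator $c'(a\omega_Q)+d'a\in\mathcal{O}$ of the ideal $\mu^{-1}a\mathcal{O}$ pins down $c',d'$ modulo $N$ up to the unit group $\mathcal{O}^{\times}$; absorbing $\mathcal{O}^{\times}$ into the stabilizer of $\omega_Q$ in $\mathrm{SL}_2(\mathbb{Z})$ and using that binary quadratic forms are even ($Q\circ(-I)=Q$) then yields $\gamma'\in\Gamma_1(N)$ after adjustment, i.e.\ $Q\sim_{\Gamma_1(N)}Q'$. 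Once $\Phi$ is a bijection, transporting the group law of $\mathcal{C}_N(\mathcal{O})$ makes $\Phi$ a group isomorphism, and composing with $\mathcal{C}_N(\mathcal{O})\xrightarrow{\sim}\mathrm{Gal}(K_{\mathcal{O},N}/K)$ from \S\ref{sect2} finishes the proof.

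I expect the main work to lie in the interface between $\Gamma_1(N)$-equivalence of forms and $P_N(\mathcal{O},N)$-equivalence of the associated lattices: tracking the homothety factor $\mu$ through a $\Gamma_1(N)$-matrix and verifying $\mu\mathcal{O}\in P_N(\mathcal{O},N)$ in the well-definedness step, and conversely extracting the congruences on the entries of $\gamma'$ from membership in $P_N(\mathcal{O},N)$ in the injectivity step — together with the bookkeeping of the extra units of $\mathcal{O}$ when $D\in\{-3,-4\}$ and of the precise generators of $P_N(\mathcal{O},N)$. Everything else is the classical form--ideal correspondence.
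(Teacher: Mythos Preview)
The paper does not prove this proposition in place; it simply cites \cite[Proposition~9.3 and Theorem~9.4]{J-K-S-Y23}. Your sketch is therefore not competing with an argument written out here but is rather reconstructing the expected content of that reference: the classical form--ideal bijection $Q\mapsto\mathbb{Z}\omega_Q+\mathbb{Z}$ of \cite[Theorem~7.7]{Cox}, refined by the verification that $\Gamma_1(N)$-equivalence on the form side corresponds exactly to $P_N(\mathcal{O},N)$-equivalence on the lattice side. Your identification of the crux --- tracking the homothety factor $\mu=(p-r\omega_Q)^{-1}$ through the congruences $p\equiv1$, $r\equiv0\pmod{N}$ to land in $P_N(\mathcal{O},N)$, and conversely extracting the congruences on the change-of-basis matrix from $\mu\mathcal{O}\in P_N(\mathcal{O},N)$ up to absorbing $\mathcal{O}^\times$ into the isotropy of $\omega_Q$ --- is accurate, and the transport-of-structure step is then formal. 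One small point worth making explicit: to pass from $a(p-r\omega_Q)\equiv a\pmod{N\mathcal{O}}$ to $\mu\mathcal{O}\in P_N(\mathcal{O},N)$ in your well-definedness step, multiply through by an integer $a'$ with $aa'\equiv1\pmod{N}$, so that $a'a\mu^{-1}\equiv1\pmod{N\mathcal{O}}$ and $\mu^{-1}\mathcal{O}=(a'a\mu^{-1})\mathcal{O}\cdot\bigl((a'a)\mathcal{O}\bigr)^{-1}$ is visibly a quotient of generators of $P_N(\mathcal{O},N)$; your phrase ``this forces'' elides that piece of bookkeeping.
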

\begin{proof}
See \cite[Proposition 9.3 and Theorem 9.4]{J-K-S-Y23}. 
\end{proof}

We call the group $\mathcal{C}_N(D)$ the \textit{extended form class group of discriminant $D$ and level $N$}. 

\begin{remark}\label{diagramremark}
\begin{enumerate}
\item[(i)] Proposition \ref{CDCO} shows that $\mathcal{C}_N(D)$ generalizes the classical Gauss's form class group $\mathcal{C}_1(D)$
(see \cite[Theorem 7.7]{Cox}). A form $Q=ax^2+bxy+cy^2\in\mathcal{Q}(D,\,1)$ is said to be \textit{reduced}
if 
\begin{equation*}
|b|\leq a\leq c\quad\textrm{and}\quad
b\geq0~\textrm{if either}~|b|=a~\textrm{or}~a=c.
\end{equation*}
From the identity $b^2-4ac=D$, it follows that if $Q$ is reduced, then 
\begin{equation*}
a\leq\sqrt{\frac{|D|}{3}}. 
\end{equation*}
It is well known that if $Q_1,\,Q_2,\,\ldots,\,Q_h$ are all the reduced forms of discriminant $D$, then 
\begin{equation*}
|\mathcal{C}_1(D)|=h\quad\textrm{and}\quad
\mathcal{C}_1(D)=\{[Q_1],\,[Q_2],\,\ldots,\,[Q_h]\}
\end{equation*}
(see \cite[Theorem 2.8]{Cox}). 
\item[(ii)] For each positive integer $M$ divisible by $N$, we obtain the following commutative diagram\,:
\begin{figure}[H]
\begin{equation*}
\xymatrixcolsep{5pc}
\xymatrix{
\mathcal{C}_M(D) \ar@{->}[r]^{\sim}\ar@{->}[r]^{\sim} 
\ar@{->}[dd]_{\textrm{the natural map}}
&\mathcal{C}_M(\mathcal{O}) \ar@{->}[r]^{\sim}\ar@{->}[dd]_{\textrm{the natural map}} &
\mathrm{Gal}(K_{\mathcal{O},\,M}/K) \ar@{->>}[dd]_{\textrm{the restriction}} \\\\
\mathcal{C}_N(D) \ar@{->}[r]^\sim & \mathcal{C}_N(\mathcal{O}) \ar@{->}[r]^{\sim}
&
\mathrm{Gal}(K_{\mathcal{O},\,N}/K)
}
\end{equation*}
\caption{A diagram of class groups}
\label{diagram}
\end{figure}
It follows that the first vertical map
$\mathcal{C}_M(D)\rightarrow\mathcal{C}_N(D)$ in Figure \ref{diagram} is a surjective homomorphism. 
\end{enumerate}
\item[(iii)] The commutative diagram in Figure \ref{diagram} 
provides a geometric interpretation of the inverse limit of CM points 
on some Shimura varieties as the Galois group $\mathrm{Gal}(K^\mathrm{ab}/\mathbb{Q})$.
See the recent work of Jung-Koo-Shin \cite{J-K-S25} for further details. 
\end{remark}

\section {Actions on special values of modular functions}\label{sect4}

Following \cite{J-K-S-Y23}, we shall further adopt the perspective that the extended form class group $\mathcal{C}_N(D)$
is isomorphic to the Galois group $\mathrm{Gal}(K_{\mathcal{O},\,N}/K)$. 
\par
Let $\mathcal{F}_N$ denote the field of meromorphic modular functions of level $N$
whose Fourier coefficients lie in the $N$th cyclotomic field $\mathbb{Q}(\zeta_N)$,
where $\zeta_N=e^{2\pi\mathrm{i}/N}$. Then $\mathcal{F}_N$ is
a Galois extension of $\mathcal{F}_1$ whose Galois group is isomorphic to 
$\mathrm{GL}_2(\mathbb{Z})/\{\pm I_2\}$ (see \cite[Theorem 3 in Chapter 6]{Lang} and \cite[Propositions 6.9 (1)]{Shimura}). 
More precisely, let $f\in\mathcal{F}_N$ have Fourier expansion
\begin{equation*}
f(\tau)=\sum_{n\gg-\infty}c_nq^{n/N}\quad\textrm{for some}~c_n\in\mathbb{Q}(\zeta_N)
\quad(\tau\in\mathbb{H},~q=e^{2\pi\mathrm{i}\tau}), 
\end{equation*}
and let $\alpha\in\mathrm{GL}_2(\mathbb{Z}/N\mathbb{Z})/\{\pm I_2\}$. 
If $\alpha\in\mathrm{SL}_2(\mathbb{Z}/N\mathbb{Z})/\{\pm I_2\}$, then its action on $f$ is given by 
\begin{equation*}
f^\alpha=f\circ\widetilde{\alpha}\quad\textrm{for any lift $\widetilde{\alpha}\in\mathrm{SL}_2(\mathbb{Z})$ 
of $\alpha$}. 
\end{equation*}
If $\alpha=\left[\begin{bmatrix}1&0\\0&d\end{bmatrix}\right]$ for some $d\in(\mathbb{Z}/N\mathbb{Z})^\times$, then 
its action on $f$ is given by 
\begin{equation*}
f^\alpha(\tau)=\sum_{n\gg-\infty}c_n^{\sigma_d}q^{n/N}
\quad\textrm{where $\sigma_d\in\mathrm{Gal}(\mathbb{Q}(\zeta_N)/\mathbb{Q})$
is defined by $\zeta_N\mapsto\zeta_N^d$}.  
\end{equation*}
\par 
Define the element $\tau_\mathcal{O}\in K\cap\mathbb{H}$ by
\begin{equation}\label{tau_O}
\tau_\mathcal{O}=\left\{
\begin{array}{cl}
\displaystyle\frac{\sqrt{D}}{2} & \textrm{if}~D\equiv0\Mod{4},\\
\displaystyle\frac{-1+\sqrt{D}}{2} & \textrm{if}~D\equiv1\Mod{4}
\end{array}\right.
\end{equation}
so that $\mathcal{O}=\mathbb{Z}\tau_\mathcal{O}+\mathbb{Z}$.  
Furthermore, we let
\begin{equation}\label{irr(t)}
\mathrm{irr}(\tau_\mathcal{O},\,\mathbb{Q})=x^2+b_\mathcal{O}x+c_\mathcal{O}\in\mathbb{Z}[x]. 
\end{equation}
Recall from Shimura's theory of canonical models for modular curves that 
\begin{equation*}
K_{\mathcal{O},\,N}=K(f(\tau_\mathcal{O})~|~f\in\mathcal{F}_N~\textrm{is finite at}~\tau_\mathcal{O})
\end{equation*}
(cf. \cite[Theorem 6.2.3]{Schertz}). 

\begin{proposition}
The map
\begin{align*}
\mathcal{C}_N(D)&\rightarrow\mathrm{Gal}(K_{\mathcal{O},\,N}/K)\\
[Q]=[ax^2+bxy+cy^2] & \mapsto \left(f(\tau_\mathcal{O})\mapsto
f^{\left[\begin{smallmatrix}1&-a'(\frac{b+b_\mathcal{O}}{2})\\
0&a'\end{smallmatrix}\right]}(-\overline{\omega_Q})~\Bigg|~f\in\mathcal{F}_N~\textrm{is finite at}~\tau_\mathcal{O}\right),
\end{align*}
where $a'\in\mathbb{Z}$ satisfies $aa'\equiv1\Mod{N}$ and $\overline{\,\cdot\,}$ denotes
the complex conjugation, is a well-defined isomorphism of groups.  
\end{proposition}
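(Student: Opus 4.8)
The plan is to identify the displayed map with the composite isomorphism $\mathcal{C}_N(D)\xrightarrow{\sim}\mathcal{C}_N(\mathcal{O})\xrightarrow{\sim}\mathrm{Gal}(K_{\mathcal{O},N}/K)$ of Proposition~\ref{CDCO} (the second arrow being the one fixed in Section~\ref{sect2}); once that is done, the facts that the recipe yields a well-defined automorphism of $K_{\mathcal{O},N}$ fixing $K$, that it is independent of the choice of $f$ representing a given value and of the representative $Q$, and that the resulting map is a group isomorphism all follow formally from Proposition~\ref{CDCO}. Two preliminary remarks make the recipe meaningful: since $b^2\equiv b_\mathcal{O}^2\equiv D\pmod 4$ we have $b\equiv b_\mathcal{O}\pmod 2$, so $\tfrac{b+b_\mathcal{O}}{2}\in\mathbb{Z}$ and (as $\gcd(a,N)=1$ determines $a'$ modulo $N$) the matrix $\big[\begin{smallmatrix}1&-a'(\frac{b+b_\mathcal{O}}{2})\\0&a'\end{smallmatrix}\big]$ is a well-defined element of $\mathrm{GL}_2(\mathbb{Z}/N\mathbb{Z})/\{\pm I_2\}$ of determinant $a'\in(\mathbb{Z}/N\mathbb{Z})^\times$; and $\mathrm{Im}(-\overline{\omega_Q})=\mathrm{Im}(\omega_Q)>0$, so $-\overline{\omega_Q}\in\mathbb{H}$. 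Using $\sqrt D=2\tau_\mathcal{O}+b_\mathcal{O}$ one obtains the key identity
\begin{equation*}
-\overline{\omega_Q}=\frac{b+\sqrt D}{2a}=\frac{\tau_\mathcal{O}+\tfrac{b+b_\mathcal{O}}{2}}{a}=\gamma_Q\tau_\mathcal{O},\qquad
\gamma_Q:=\begin{bmatrix}1&\tfrac{b+b_\mathcal{O}}{2}\\[1pt]0&a\end{bmatrix}\in\mathrm{GL}_2^+(\mathbb{Q}),
\end{equation*}
and a one-line check gives $\big[\begin{smallmatrix}1&-a'(\frac{b+b_\mathcal{O}}{2})\\0&a'\end{smallmatrix}\big]\equiv\gamma_Q^{-1}\pmod N$ (the entries of $\gamma_Q$ lie in $\mathbb{Z}_{(N)}$, so the reduction makes sense). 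Hence it suffices to show that the image $\sigma_Q\in\mathrm{Gal}(K_{\mathcal{O},N}/K)$ of $[Q]$ satisfies $\sigma_Q\big(f(\tau_\mathcal{O})\big)=f^{(\gamma_Q\bmod N)^{-1}}(\gamma_Q\tau_\mathcal{O})$ for every $f\in\mathcal{F}_N$ finite at $\tau_\mathcal{O}$.

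Next I would pass to the maximal order. Since $Q$ is primitive, after replacing it by a $\Gamma_1(N)$-equivalent form — harmless via the isomorphism $\mathcal{C}_N(\mathcal{O},\ell_\mathcal{O}N)\xrightarrow{\sim}\mathcal{C}_N(\mathcal{O})$, once one also checks that $f^{(\gamma_Q\bmod N)^{-1}}(\gamma_Q\tau_\mathcal{O})$ depends only on the class $[Q]$ — one may assume $\gcd(a,\ell_\mathcal{O}N)=1$. Then $\mathfrak b:=\mathbb{Z}a+\mathbb{Z}(a\omega_Q)=\big[a,\tfrac{-b+\sqrt D}{2}\big]$ is an integral proper $\mathcal{O}$-ideal of norm $a$, prime to $\ell_\mathcal{O}N$, with $[\mathfrak b]=[\mathbb{Z}\omega_Q+\mathbb{Z}]$ in $\mathcal{C}_N(\mathcal{O})$; by Proposition~\ref{CDCO} together with the isomorphism (\ref{CaIP}), $\sigma_Q=\big(\tfrac{K_{\mathcal{O},N}/K}{\mathfrak b\mathcal{O}_K}\big)$, where $\mathfrak b\mathcal{O}_K=a\mathcal{O}_K+(a\omega_Q)\mathcal{O}_K$ is prime to $\ell_\mathcal{O}N\mathcal{O}_K$.

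Now I would apply Shimura's reciprocity law at the CM point $\tau_\mathcal{O}\in\mathbb{H}\cap K$: for $f\in\mathcal{F}_N$ finite at $\tau_\mathcal{O}$ and a finite idele $s$ of $K$, $f(\tau_\mathcal{O})^{[s,K]}=f^{\,g_{\tau_\mathcal{O}}(s)}(\tau_\mathcal{O})$, where $g_{\tau_\mathcal{O}}(s)\in\mathrm{GL}_2(\mathbb{A}_{\mathbb{Q},f})$ is the matrix of multiplication by $s$ on $K\otimes_{\mathbb{Q}}\mathbb{A}_{\mathbb{Q},f}$ in the basis $(\tau_\mathcal{O},1)$, and $f^{g}$ is computed by writing $g=\beta w$ with $\beta\in\mathrm{GL}_2^+(\mathbb{Q})$, $w\in\prod_p\mathrm{GL}_2(\mathbb{Z}_p)$ and putting $f^{g}=(f^{\,w\bmod N})\circ\beta$. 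Choosing $s$ equal to $1$ at every place dividing $\ell_\mathcal{O}N$ and to a local generator of $\mathfrak b\mathcal{O}_K$ elsewhere makes $[s,K]$ restrict to $\sigma_Q$ on $K_{\mathcal{O},N}\subseteq K_{(\ell_\mathcal{O}N)}$. One then computes $g_{\tau_\mathcal{O}}(s)$ place by place: it is a unit matrix at every prime not dividing $a$ (for $p\mid\ell_\mathcal{O}N$ since $s_p=1$; for $p\nmid a\ell_\mathcal{O}N$ since $s_p$ is a unit and $(\tau_\mathcal{O},1)$ is a $\mathbb{Z}_p$-basis of $\mathcal{O}_{K,p}=\mathcal{O}_p$), while at $p\mid a$ — where $\mathcal{O}_p=\mathcal{O}_{K,p}$ — a direct $\mathbb{Z}_p$-linear computation in the basis $(\tau_\mathcal{O},1)$, using $\mathfrak b=\mathbb{Z}a+\mathbb{Z}(a\omega_Q)$ and $a\omega_Q^2+b\omega_Q+c=0$, pins down the local component. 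Assembling these yields a factorization $g_{\tau_\mathcal{O}}(s)=\gamma_Q\,w$ with $w\in\prod_p\mathrm{GL}_2(\mathbb{Z}_p)$ and $w\equiv(\gamma_Q\bmod N)^{-1}\pmod N$; substituting into the reciprocity formula gives $\sigma_Q(f(\tau_\mathcal{O}))=(f^{(\gamma_Q\bmod N)^{-1}}\!\circ\gamma_Q)(\tau_\mathcal{O})=f^{(\gamma_Q\bmod N)^{-1}}(-\overline{\omega_Q})$, as asserted. As a sanity check, the level-$1$ specialization ($f=j$, trivial matrix action) reduces to $j(\tau_\mathcal{O})^{\sigma_Q}=j(\overline{\mathfrak b})=j(-\overline{\omega_Q})$, i.e.\ to the classical main theorem of complex multiplication for orders; this is also where one sees that the complex conjugation is forced, since the Artin symbol sends the lattice $\mathcal{O}$ to the homothety class of $\overline{\mathfrak b}$, whose period ratio in $\mathbb{H}$ is $-\overline{\omega_Q}$ and not $\omega_Q$.

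The main obstacle is precisely this last computation: the local analysis at the primes dividing $a$, together with the bookkeeping needed to make all normalizations consistent — the direction of the reciprocity/Artin map, the left-versus-right convention for the $\mathrm{GL}_2$-action on $\mathcal{F}_N$, and the fact that it is $\overline{\mathfrak b}$ (equivalently $[\mathfrak b]^{-1}$, since $\mathfrak b\overline{\mathfrak b}=N(\mathfrak b)\mathcal{O}$ is principal) rather than $\mathfrak b$ that governs the action — so that the decomposition $g_{\tau_\mathcal{O}}(s)=\gamma_Q w$ comes out with exactly the stated $\gamma_Q$ and with $w$ reducing to exactly the stated upper-triangular matrix modulo $N$. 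The same identification can alternatively be extracted from the (less streamlined) description of the conjugates of $f(\tau_\mathcal{O})$ in \cite[Theorem~8]{Cho}, after re-indexing by the extended form class group and conjugating the reference point by $\tau\mapsto-\overline{\tau}$; the present statement is the simplification this affords. Finally, granting the identification $[Q]\mapsto\sigma_Q$, the displayed map is the composite of the isomorphisms of Proposition~\ref{CDCO}, hence a well-defined isomorphism of groups.
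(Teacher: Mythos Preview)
The paper does not actually prove this proposition: its entire proof reads ``See \cite[Theorem 12.3]{J-K-S-Y23}.'' So there is no in-paper argument to compare against; the result is imported wholesale from the earlier work of Jung--Koo--Shin--Yoon.

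Your sketch is a plausible reconstruction of how such a proof goes, and the key observations are correct: the identity $-\overline{\omega_Q}=\gamma_Q\tau_\mathcal{O}$ with $\gamma_Q=\left[\begin{smallmatrix}1&(b+b_\mathcal{O})/2\\0&a\end{smallmatrix}\right]$, the fact that the displayed upper-triangular matrix is $\gamma_Q^{-1}\bmod N$, and the strategy of invoking Shimura's reciprocity law at the CM point $\tau_\mathcal{O}$ with an idele supported away from $\ell_\mathcal{O}N$. This is indeed the shape of the argument in the cited reference, and your level-$1$ sanity check (recovering $j(\tau_\mathcal{O})^{\sigma_Q}=j(-\overline{\omega_Q})$) is a good sign that the conventions are aligned.

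That said, what you present is explicitly a plan rather than a proof: you yourself flag the ``main obstacle,'' namely the local computation at primes dividing $a$ and the sign/direction bookkeeping needed to obtain exactly $g_{\tau_\mathcal{O}}(s)=\gamma_Q w$ with $w\equiv\gamma_Q^{-1}\pmod N$. There is also a mild circularity in your reduction step: you want to replace $Q$ by a $\Gamma_1(N)$-equivalent form with $\gcd(a,\ell_\mathcal{O}N)=1$ ``once one also checks'' that the right-hand side depends only on $[Q]$ --- but that well-definedness is part of what is being proved, so it should be verified directly (a short computation using the $\Gamma_1(N)$-invariance of $f\in\mathcal{F}_N$ and the transformation of $\omega_Q$ under $\mathrm{SL}_2(\mathbb{Z})$) rather than deferred. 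None of this is fatal; these are precisely the details that the cited paper \cite{J-K-S-Y23} works out.
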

\begin{proof}
See \cite[Theorem 12.3]{J-K-S-Y23}. 
\end{proof}

\section {Galois groups over ring class fields}

In this section, we
shall review Stevenhagen's results, presenting an explicit description of Shimura's reciprocity law. 
\par
Let $W_{\mathcal{O},\,N}$ be the Cartan subgroup of $\mathrm{GL}_2(\mathbb{Z}/N\mathbb{Z})$
associated with the $(\mathbb{Z}/N\mathbb{Z})$-algebra $\mathcal{O}/N\mathcal{O}$
with ordered basis $\{\tau_\mathcal{O}+N\mathcal{O},\,1+N\mathcal{O}\}$. Then we have
\begin{equation*}
W_{\mathcal{O},\,N}=\left\{\gamma=
\begin{bmatrix}t-b_\mathcal{O}s & -c_\mathcal{O}s\\s&t\end{bmatrix}~|~
s,\,t\in\mathbb{Z}/N\mathbb{Z}~\textrm{such that}~\gamma\in\mathrm{GL}_2(\mathbb{Z}/N\mathbb{Z})\right\},
\end{equation*}
where $b_\mathcal{O}$ and $c_\mathcal{O}$ are as in (\ref{irr(t)}). 
Let $H_\mathcal{O}$ denote the ring class field of order $\mathcal{O}$. 
The following concrete version of Shimura's reciprocity law is due to Stevenhagen.

\begin{proposition}\label{W/T}
If $T_{\mathcal{O},\,N}$ denotes the image
of the subgroup $\{\nu+N\mathcal{O}~|~\nu\in\mathcal{O}^\times\}$ of $(\mathcal{O}/N\mathcal{O})^\times$
under the isomorphism $(\mathcal{O}/N\mathcal{O})^\times
\stackrel{\sim}{\rightarrow} W_{\mathcal{O},\,N}$, then
there exists a well-defined isomorphism
\begin{align*}
W_{\mathcal{O},\,N}/T_{\mathcal{O},\,N}&\rightarrow\mathrm{Gal}(K_{\mathcal{O},\,N}/
H_\mathcal{O})\\
[\gamma]&\mapsto
\left(f(\tau_\mathcal{O})\mapsto
f^\gamma(\tau_\mathcal{O})
~|~f\in\mathcal{F}_N~\textrm{is finite at}~\tau_\mathcal{O}\right).
\end{align*}
\end{proposition}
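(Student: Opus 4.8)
The plan is to derive the statement from the adelic form of Shimura's reciprocity law together with the idelic descriptions of $H_\mathcal{O}$ and $K_{\mathcal{O},\,N}$. Write $\widehat{\mathcal{O}}=\mathcal{O}\otimes_{\mathbb{Z}}\widehat{\mathbb{Z}}$ and let $\widehat{\mathcal{O}}_N^\times$ be the kernel of the reduction $\widehat{\mathcal{O}}^\times\to(\mathcal{O}/N\mathcal{O})^\times$. First I would record that the regular representation of $\mathcal{O}/N\mathcal{O}$ on itself with respect to the ordered basis $\{\tau_\mathcal{O}+N\mathcal{O},\,1+N\mathcal{O}\}$ is exactly the isomorphism $(\mathcal{O}/N\mathcal{O})^\times\stackrel{\sim}{\to}W_{\mathcal{O},\,N}$ used to define $T_{\mathcal{O},\,N}$, and that it is the mod~$N$ reduction of the homomorphism $g=g_{\tau_\mathcal{O}}\colon\mathbb{A}_{K,f}^\times\to\mathrm{GL}_2(\mathbb{A}_{\mathbb{Q},f})$ attached to the CM point $\tau_\mathcal{O}$ via the lattice $\mathbb{Z}\tau_\mathcal{O}+\mathbb{Z}$; thus $g(\widehat{\mathcal{O}}^\times)\subseteq\mathrm{GL}_2(\widehat{\mathbb{Z}})$ reduces mod~$N$ onto $W_{\mathcal{O},\,N}$ and $g(\widehat{\mathcal{O}}_N^\times)$ acts trivially on $\mathcal{F}_N$. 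Shimura's reciprocity law then gives, for every $x\in\widehat{\mathcal{O}}^\times$ and every $f\in\mathcal{F}_N$ finite at $\tau_\mathcal{O}$,
\begin{equation*}
f(\tau_\mathcal{O})^{(x,\,K^{\mathrm{ab}}/K)}=f^{\,g(x)\bmod N}(\tau_\mathcal{O}),
\end{equation*}
with the usual normalization of the Artin map (the opposite normalization only replaces $\gamma$ by $\gamma^{-1}$ and is harmless).

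Next I would carry out the class field theory bookkeeping. By Stevenhagen's idelic reformulation, compatible with the description of $\mathcal{C}_N(\mathcal{O})$ recalled in Section~\ref{sect2}, the field $H_\mathcal{O}$ is the class field of $K^\times\widehat{\mathcal{O}}^\times$ and $K_{\mathcal{O},\,N}$ is the class field of $K^\times\widehat{\mathcal{O}}_N^\times$ inside $\mathbb{A}_{K,f}^\times$. Hence the Artin map induces
\begin{equation*}
\mathrm{Gal}(K_{\mathcal{O},\,N}/H_\mathcal{O})\;\cong\;K^\times\widehat{\mathcal{O}}^\times\big/K^\times\widehat{\mathcal{O}}_N^\times\;\cong\;\widehat{\mathcal{O}}^\times\big/\bigl(\widehat{\mathcal{O}}_N^\times\cdot(K^\times\cap\widehat{\mathcal{O}}^\times)\bigr).
\end{equation*}
Since $\bigcap_{\mathfrak{p}}\mathcal{O}_\mathfrak{p}=\mathcal{O}$, an element of $K^\times$ that lies in $\widehat{\mathcal{O}}^\times$ is a global unit, so $K^\times\cap\widehat{\mathcal{O}}^\times=\mathcal{O}^\times$, while $\widehat{\mathcal{O}}^\times/\widehat{\mathcal{O}}_N^\times\cong(\mathcal{O}/N\mathcal{O})^\times$ by the Chinese remainder theorem. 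Transporting through $(\mathcal{O}/N\mathcal{O})^\times\stackrel{\sim}{\to}W_{\mathcal{O},\,N}$ sends the image of $\mathcal{O}^\times$ to $T_{\mathcal{O},\,N}$ by the very definition of the latter, and so yields an abstract isomorphism $W_{\mathcal{O},\,N}/T_{\mathcal{O},\,N}\stackrel{\sim}{\to}\mathrm{Gal}(K_{\mathcal{O},\,N}/H_\mathcal{O})$.

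It remains to identify this isomorphism with the stated one. Chasing a class $[\gamma]\in W_{\mathcal{O},\,N}/T_{\mathcal{O},\,N}$ through the identifications above produces the automorphism $(x,\,K^{\mathrm{ab}}/K)|_{K_{\mathcal{O},\,N}}$ for any $x\in\widehat{\mathcal{O}}^\times$ with $g(x)\equiv\gamma\Mod{N}$, which by the reciprocity law of the first paragraph sends $f(\tau_\mathcal{O})\mapsto f^\gamma(\tau_\mathcal{O})$ for every $f\in\mathcal{F}_N$ finite at $\tau_\mathcal{O}$; each such $f^\gamma$ is again finite at $\tau_\mathcal{O}$ because it is a conjugate of the algebraic number $f(\tau_\mathcal{O})$, and independence of the representative $\gamma$ modulo $T_{\mathcal{O},\,N}$ is forced by the fact that elements $\nu\in\mathcal{O}^\times$, being principal ideles, have trivial Artin symbol $(\nu,\,K^{\mathrm{ab}}/K)=1$. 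An alternative, adele-free route would be to restrict the explicit isomorphism of the Proposition in Section~\ref{sect4} to the kernel of $\mathcal{C}_N(D)\to\mathcal{C}_1(D)$ --- equivalently, to forms $Q$ with $\omega_Q$ equivalent to $\tau_\mathcal{O}$ under $\mathrm{SL}_2(\mathbb{Z})$ --- and to rewrite $f^{[\,\cdot\,]}(-\overline{\omega_Q})$ in the shape $f^\gamma(\tau_\mathcal{O})$ with $\gamma\in W_{\mathcal{O},\,N}$. Either way, the main obstacle is the normalization matching in this last step: one must pin down the direction of the Artin map and of the representation $g_{\tau_\mathcal{O}}$, and reconcile the genuine group $\mathrm{GL}_2(\mathbb{Z}/N\mathbb{Z})\supseteq W_{\mathcal{O},\,N}$ with the quotient $\mathrm{GL}_2(\mathbb{Z}/N\mathbb{Z})/\{\pm I_2\}$ through which $\mathcal{F}_N$ is actually acted on, so that the formula emerges as $f(\tau_\mathcal{O})\mapsto f^\gamma(\tau_\mathcal{O})$ on the nose rather than up to an inversion or a sign.
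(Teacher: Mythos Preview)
Your proposal is correct and is precisely an unpacking of the paper's own proof, which consists solely of the citation ``See \cite[Sections 3, 4, 6]{Stevenhagen} (cf.\ \cite[Lemma 15.17 and Theorem 15.22]{Cox})''; the adelic Shimura reciprocity computation you sketch---identifying $H_\mathcal{O}$ and $K_{\mathcal{O},\,N}$ with the idele class groups $K^\times\widehat{\mathcal{O}}^\times$ and $K^\times\widehat{\mathcal{O}}_N^\times$, computing the quotient via $K^\times\cap\widehat{\mathcal{O}}^\times=\mathcal{O}^\times$, and then reading off the action on $f(\tau_\mathcal{O})$---is exactly Stevenhagen's argument in those sections. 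Your honest flagging of the normalization issue is appropriate, and the alternative form-class-group route you mention at the end is closer in spirit to what the paper does elsewhere (Proposition~5.3), but for this proposition the paper simply defers to the literature.
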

\begin{proof}
See \cite[Sections 3, 4, 6]{Stevenhagen} (cf. \cite[Lemma 15.17 and Theorem 15.22]{Cox}). 
\end{proof}

\begin{remark}
\begin{enumerate}
\item[(i)] Note that
\begin{equation*}
\mathcal{O}^\times=\mathcal{O}\cap\mathcal{O}_K^\times=\left\{\begin{array}{ll}
\langle\zeta_6\rangle & \textrm{if}~D=-3,\\
\langle\zeta_4\rangle & \textrm{if}~D=-4,\\
\langle\zeta_2\rangle & \textrm{if}~D\neq-3,\,-4
\end{array}\right.
\end{equation*}
(see \cite[Exercises 4.5, 4.16, 5.9]{Cox}). 
\item[(ii)] By applying Shimura's reciprocity law, Stevenhagen 
\cite[Section 6]{Stevenhagen} also constructed an injective map $\mathcal{C}_1(D)\rightarrow\mathrm{Gal}(K_{\mathcal{O},\,N}/K)$,
extending the isomorphism $\mathcal{C}_1(D)\stackrel{\sim}{\rightarrow}\mathrm{Gal}(H_\mathcal{O}/K)$ 
arising in the theory of complex multiplication. 
Combining this with Proposition \ref{W/T}, Cho \cite{Cho} essentially established a bijection between 
$\mathcal{C}_1(D)\times W_{\mathcal{O},\,N}/T_{\mathcal{O},\,N}$ and 
$\mathrm{Gal}(K_{\mathcal{O},\,N}/K)$ (cf. \cite[Theorem 2.4]{J-K-S11} for the case $\mathcal{O}=\mathcal{O}_K$). 
\item[(iii)]
Due independently to 
Baker \cite{Baker}, 
Heegner \cite{Heegner} and 
Stark \cite{Stark}, it is known that the class number $|\mathcal{C}_1(D)|$ is one
(equivalently, $H_\mathcal{O}=K$)
if and only if 
\begin{equation*}
D\in\{-3,\,-4,\,-7,\,-8,\,-11,\,-12,\,-16,\,-19,\,-27,\,-28,\,-43,\,-67,\,-163\}. 
\end{equation*}
See also \cite[Theorems 7.24, 7.30 and 12.34]{Cox}. 
In this case, $\mathrm{Gal}(K_{\mathcal{O},\,N}/K)=\mathrm{Gal}(K_{\mathcal{O},\,N}/H_\mathcal{O})$ is
completely described
by Proposition \ref{W/T}. 
\end{enumerate}
\end{remark}

\begin{proposition}
Let $Q_0$ be the principal form in $\mathcal{Q}(D,\,N)$, that is, 
$Q_0=x^2+b_\mathcal{O}xy+c_\mathcal{O}y^2$. 
Then the map
\begin{align*}
\left\{
\left[
Q_0^\gamma\right]
\in\mathcal{C}_N(D)~|~\gamma\in\mathrm{SL}_2(\mathbb{Z})~
\textrm{such that}~Q_0^\gamma\in\mathcal{Q}(D,\,N)\right\}
&\rightarrow W_{\mathcal{O},\,N}/T_{\mathcal{O},\,N}\\
\left[Q_0^{\left[\begin{smallmatrix}
d_1&d_2\\d_3&d_4
\end{smallmatrix}\right]}\right] & \mapsto
\left[
Q_0(d_1,\,d_3)'\begin{bmatrix}
d_1 & -c_\mathcal{O}d_3\\d_3 & d_1+b_\mathcal{O}d_3
\end{bmatrix}
\right],
\end{align*}
where $Q_0(d_1,\,d_3)'$ is any integer satisfying $Q_0(d_1,\,d_3)Q_0(d_1,\,d_3)'\equiv1\Mod{N}$,
is a well-defined isomorphism of groups.   
\end{proposition}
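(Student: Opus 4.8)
The plan is to identify the map in the statement --- call it $\Phi$ --- with the composite $\psi^{-1}\circ\iota$, where $\iota\colon\mathcal{C}_N(D)\stackrel{\sim}{\rightarrow}\mathrm{Gal}(K_{\mathcal{O},N}/K)$ is the isomorphism of Proposition~\ref{CDCO} in the explicit form spelled out in Section~\ref{sect4}, and $\psi\colon W_{\mathcal{O},N}/T_{\mathcal{O},N}\stackrel{\sim}{\rightarrow}\mathrm{Gal}(K_{\mathcal{O},N}/H_\mathcal{O})$ is the isomorphism of Proposition~\ref{W/T}, composed with the inclusion $\mathrm{Gal}(K_{\mathcal{O},N}/H_\mathcal{O})\hookrightarrow\mathrm{Gal}(K_{\mathcal{O},N}/K)$. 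Once the identity $\iota=\psi\circ\Phi$ on $\{[Q_0^\gamma]\}$ is established, every assertion --- well-definedness, the homomorphism property, bijectivity --- becomes formal.

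First I would translate the form-theoretic data into the upper half-plane. Since $Q_0(x,1)=x^2+b_\mathcal{O}x+c_\mathcal{O}=\mathrm{irr}(\tau_\mathcal{O},\mathbb{Q})$ we have $\omega_{Q_0}=\tau_\mathcal{O}$, and for $\gamma=\left[\begin{smallmatrix}d_1&d_2\\ d_3&d_4\end{smallmatrix}\right]\in\mathrm{SL}_2(\mathbb{Z})$ the root of $Q_0^\gamma(x,1)$ in $\mathbb{H}$ is $\omega_{Q_0^\gamma}=\gamma^{-1}\tau_\mathcal{O}$. Using $\overline{\tau_\mathcal{O}}=-b_\mathcal{O}-\tau_\mathcal{O}$ one obtains
\[
-\overline{\omega_{Q_0^\gamma}}=M\tau_\mathcal{O},\qquad
M=\begin{bmatrix}d_4&d_2+b_\mathcal{O}d_4\\ d_3&d_1+b_\mathcal{O}d_3\end{bmatrix}\in\mathrm{SL}_2(\mathbb{Z})
\]
(its determinant is $d_1d_4-d_2d_3=1$). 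Writing $Q_0^\gamma=ax^2+bxy+cy^2$, we have $a=Q_0(d_1,d_3)$, $b=2d_1d_2+b_\mathcal{O}(d_1d_4+d_2d_3)+2c_\mathcal{O}d_3d_4$, and (using $d_1d_4-d_2d_3=1$) $e:=\tfrac{b+b_\mathcal{O}}{2}=d_1d_2+b_\mathcal{O}d_1d_4+c_\mathcal{O}d_3d_4\in\mathbb{Z}$. By the explicit description of $\iota$ in Section~\ref{sect4}, $\iota([Q_0^\gamma])$ sends $f(\tau_\mathcal{O})$ to $f^{\beta}(-\overline{\omega_{Q_0^\gamma}})$, with $\beta=\left[\begin{smallmatrix}1&-a'e\\ 0&a'\end{smallmatrix}\right]$ and $a'=Q_0(d_1,d_3)'$; since $f^\beta\in\mathcal{F}_N$ and $f\mapsto f^\alpha$ is a right action of $\mathrm{GL}_2(\mathbb{Z}/N\mathbb{Z})/\{\pm I_2\}$ whose $\mathrm{SL}_2$-part is precomposition with a lift, $f^\beta(-\overline{\omega_{Q_0^\gamma}})=f^\beta(M\tau_\mathcal{O})=f^{\beta M}(\tau_\mathcal{O})$, with $M$ read modulo $N$.

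The computational core is the congruence
\[
\beta M\equiv a'\begin{bmatrix}d_1&-c_\mathcal{O}d_3\\ d_3&d_1+b_\mathcal{O}d_3\end{bmatrix}\pmod{N}.
\]
The bottom rows agree outright; for the top row, multiplying through by the unit $a$ (note $\gcd(a,N)=1$ because $Q_0^\gamma\in\mathcal{Q}(D,N)$, and $aa'\equiv1$) reduces matters to the two polynomial identities
\[
ad_4-ed_3=d_1,\qquad a(d_2+b_\mathcal{O}d_4)-e(d_1+b_\mathcal{O}d_3)=-c_\mathcal{O}d_3,
\]
which hold over $\mathbb{Z}$ after substituting $a=d_1^2+b_\mathcal{O}d_1d_3+c_\mathcal{O}d_3^2$ and using $d_1d_4-d_2d_3=1$. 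The right-hand side of the congruence is exactly $\gamma_{\mathrm{Cartan}}:=Q_0(d_1,d_3)'\left[\begin{smallmatrix}d_1&-c_\mathcal{O}d_3\\ d_3&d_1+b_\mathcal{O}d_3\end{smallmatrix}\right]$, which lies in $W_{\mathcal{O},N}$ (take $s=a'd_3$, $t=a'(d_1+b_\mathcal{O}d_3)$; it is invertible since $\det\gamma_{\mathrm{Cartan}}\equiv a'(a'a)\equiv a'\pmod N$). Hence $\iota([Q_0^\gamma])$ acts by $f(\tau_\mathcal{O})\mapsto f^{\gamma_{\mathrm{Cartan}}}(\tau_\mathcal{O})$, which by Proposition~\ref{W/T} is precisely $\psi([\gamma_{\mathrm{Cartan}}])$; thus $\iota=\psi\circ\Phi$ on $\{[Q_0^\gamma]\}$.

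It remains to read off the statement. Independence of $\Phi$ from the choice of $Q_0(d_1,d_3)'$ is clear, since two choices differ by a multiple of $N$. The subset $\{[Q_0^\gamma]\}$ is the kernel of the natural surjection $\mathcal{C}_N(D)\twoheadrightarrow\mathcal{C}_1(D)$ --- a class lies there exactly when it is $\mathrm{SL}_2(\mathbb{Z})$-equivalent to the principal form $Q_0$ --- and hence is a subgroup; moreover, by the commutative diagram of Remark~\ref{diagramremark}(ii) applied with levels $N$ and $1$ (so that $H_\mathcal{O}=K_{\mathcal{O},1}$), the isomorphism $\iota$ carries this kernel isomorphically onto $\ker\bigl(\mathrm{Gal}(K_{\mathcal{O},N}/K)\to\mathrm{Gal}(H_\mathcal{O}/K)\bigr)=\mathrm{Gal}(K_{\mathcal{O},N}/H_\mathcal{O})$. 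Thus $\iota$ restricts to a group isomorphism $\{[Q_0^\gamma]\}\stackrel{\sim}{\rightarrow}\mathrm{Gal}(K_{\mathcal{O},N}/H_\mathcal{O})$, while $\psi$ is an injective homomorphism onto the same group; since $\iota=\psi\circ\Phi$, the map $\Phi=\psi^{-1}\circ\iota$ is forced to be well defined (in particular independent of the chosen $\gamma$ --- this is where the quotient by $T_{\mathcal{O},N}$ enters), a group homomorphism, and bijective, which completes the proof. The one step that is not purely formal is the matrix congruence displayed above; everything else is bookkeeping with the correspondence between forms, proper ideals, and Galois actions already recorded in Sections~\ref{sect3}--\ref{sect4} and in Proposition~\ref{W/T}.
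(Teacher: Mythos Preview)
Your argument is correct. The paper itself does not give a proof but only refers to \cite[Remark 10.2]{J-K-S-Y22} for the case $\mathcal{O}=\mathcal{O}_K$ and to \cite{Yoon}; your approach --- identifying the map as $\psi^{-1}\circ\iota$ restricted to the kernel of $\mathcal{C}_N(D)\twoheadrightarrow\mathcal{C}_1(D)$, and reducing everything to the single matrix congruence $\beta M\equiv\gamma_{\mathrm{Cartan}}\pmod{N}$ --- is exactly the natural argument and is in the same spirit as those references, so there is no genuine difference in strategy to report.
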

\begin{proof}
The proof is similar to that for the case $\mathcal{O}=\mathcal{O}_K$ in \cite[Remark 10.2]{J-K-S-Y22}
(cf. \cite{Yoon}). 
\end{proof}

\section {Explicit construction of extended form class groups}

We are now ready to prove our main theorem, 
which gives a concrete description of the extended form class group
$\mathcal{C}_N(D)$. 

\begin{lemma}\label{isotropy}
Let $Q\in\mathcal{Q}(D,\,1)$. If $D\neq-3,\,-4$, then 
the isotropy group of $Q$ in $\mathrm{SL}_2(\mathbb{Z})$ is precisely $\{\pm I_2\}$. 
\end{lemma}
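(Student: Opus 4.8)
The plan is to translate the statement about binary quadratic forms into a statement about the imaginary quadratic order $\mathcal{O}$ and its ideal class group, where the analogous fact is classical. Concretely, a form $Q = ax^2 + bxy + cy^2 \in \mathcal{Q}(D,1)$ corresponds to the proper fractional $\mathcal{O}$-ideal $\mathfrak{a}_Q = \mathbb{Z}\omega_Q + \mathbb{Z}$ with $\omega_Q = (-b+\sqrt{D})/(2a)$, and the right action of $\gamma \in \mathrm{SL}_2(\mathbb{Z})$ on $Q$ corresponds, up to homothety, to the linear-fractional action of $\gamma$ on $\omega_Q$. Thus $\gamma$ lies in the isotropy group of $Q$ precisely when $\gamma$ fixes $\omega_Q$ as a point of $\mathbb{H}$ (equivalently, $\gamma \omega_Q = \omega_Q$ with the two $\mathbb{Z}$-bases $\{\omega_Q,1\}$ and $\{\gamma\omega_Q,1\}$ matching up to sign — but since both give the same lattice, the scaling factor is a unit of $\mathcal{O}$).

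First I would make this correspondence precise: if $Q^\gamma = Q$ with $\gamma = \left[\begin{smallmatrix} p & q \\ r & s \end{smallmatrix}\right]$, then comparing the quadratic forms forces $\omega_Q = (p\omega_Q + q)/(r\omega_Q + s)$ (one checks that $Q(x,1)$ and $Q^\gamma(x,1)$ have the same root in $\mathbb{H}$), i.e. $\omega_Q$ is a fixed point of the Möbius transformation attached to $\gamma$. Then I would invoke the standard classification of elliptic/parabolic elements of $\mathrm{SL}_2(\mathbb{Z})$: a matrix $\gamma \neq \pm I_2$ fixing a point $\tau \in \mathbb{H}$ must be elliptic, hence have $|\mathrm{tr}\,\gamma| < 2$, so $\mathrm{tr}\,\gamma \in \{-1,0,1\}$; its characteristic polynomial then divides $x^2+1$, $x^2+x+1$, or $x^2-x+1$, so $\gamma$ has order $4$ or $6$ (or $3$), and in all cases $\mathbb{Z}[\gamma]$ is an order in $\mathbb{Q}(i)$ or $\mathbb{Q}(\zeta_3)$. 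The fixed point $\tau$ of such a $\gamma$ is then forced to be $\mathrm{SL}_2(\mathbb{Z})$-equivalent to $i$ or $\zeta_3 = e^{2\pi\mathrm{i}/3}$, so $K = \mathbb{Q}(\sqrt{D})$ must be $\mathbb{Q}(i)$ or $\mathbb{Q}(\sqrt{-3})$ — but more is true: since $\omega_Q$ generates the order $\mathcal{O}$ of discriminant $D$ (as $\mathfrak{a}_Q$ is a proper $\mathcal{O}$-ideal), one gets $\mathbb{Z}[\omega_Q] \supseteq \mathbb{Z}[\gamma\text{-fixed point data}]$, which pins $D$ down to $-4$ or $-3$ exactly.

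Alternatively, and perhaps more cleanly, I would argue directly on lattices: write $\mathfrak{a} = \mathfrak{a}_Q = \mathbb{Z}\omega_Q + \mathbb{Z}$, a proper $\mathcal{O}$-ideal, so $\mathcal{O} = \{\lambda \in K : \lambda \mathfrak{a} \subseteq \mathfrak{a}\}$. A relation $Q^\gamma = Q$ yields $\gamma \left[\begin{smallmatrix}\omega_Q\\1\end{smallmatrix}\right] = \lambda \left[\begin{smallmatrix}\omega_Q\\1\end{smallmatrix}\right]$ for some $\lambda \in \mathbb{C}^\times$; since $\gamma \in \mathrm{SL}_2(\mathbb{Z})$ permutes a $\mathbb{Z}$-basis, $\lambda\mathfrak{a} = \mathfrak{a}$, hence $\lambda \in \mathcal{O}^\times$, and moreover $\lambda$ is an eigenvalue of an integer matrix so $\lambda$ is an algebraic integer with $\lambda\bar\lambda = \det\gamma = 1$. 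Thus $\lambda$ is a root of unity in $\mathcal{O}$. By the computation of $\mathcal{O}^\times$ recalled in the excerpt's Remark following Proposition \ref{W/T}, when $D \neq -3, -4$ we have $\mathcal{O}^\times = \{\pm 1\}$, so $\lambda = \pm 1$; then $\gamma \left[\begin{smallmatrix}\omega_Q\\1\end{smallmatrix}\right] = \pm\left[\begin{smallmatrix}\omega_Q\\1\end{smallmatrix}\right]$ with $\{\omega_Q, 1\}$ a $\mathbb{C}$-basis of $\mathbb{C}$ over $\mathbb{R}$ forces $\gamma = \pm I_2$. I expect this lattice-theoretic route to be the shortest, and the only real subtlety — the main obstacle — is making airtight the step that $\gamma\left[\begin{smallmatrix}\omega_Q\\1\end{smallmatrix}\right]$ is a scalar multiple of $\left[\begin{smallmatrix}\omega_Q\\1\end{smallmatrix}\right]$, i.e. that the action of $\mathrm{SL}_2(\mathbb{Z})$ on forms by $Q \mapsto Q^\gamma$ really does correspond to $\omega_Q \mapsto \gamma^{-1}\cdot\omega_Q$ (or its inverse, depending on conventions) on the associated CM points; this is where one must carefully match the left/right conventions used in the paper's definition of $\mathcal{Q}(D,N)$ and of $\omega_Q$. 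Once that dictionary is fixed, the conclusion $\gamma \in \{\pm I_2\}$ is immediate from $|\mathcal{O}^\times| = 2$.
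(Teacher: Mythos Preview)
Your proposal is correct. The paper's own proof of this lemma is simply a one-line citation to \cite[Proposition 1.5 (c)]{Silverman}, so you have in fact supplied what the paper outsources. Your second, lattice-theoretic route is essentially the argument underlying the cited result: the stabilizer of $\omega_Q$ in $\mathrm{SL}_2(\mathbb{Z})$ is identified with the unit group $\mathcal{O}^\times$ of the endomorphism ring of the associated CM elliptic curve, and $\mathcal{O}^\times=\{\pm1\}$ unless $D=-3$ or $-4$. Compared with the bare citation, your write-up has the advantage of being self-contained and of making explicit why the excluded discriminants are exactly $-3$ and $-4$; the only care required, as you note, is matching the paper's convention $\omega_{Q^\gamma}=\gamma^{-1}(\omega_Q)$ (stated just before Lemma~\ref{bar}) so that $Q^\gamma=Q$ translates to $\gamma(\omega_Q)=\omega_Q$, after which $\lambda=r\omega_Q+s$ satisfies $\lambda\mathfrak{a}_Q=\mathfrak{a}_Q$ and $\lambda\bar\lambda=\det\gamma=1$, giving $\lambda\in\mathcal{O}^\times$.
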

\begin{proof}
See \cite[Proposition 1.5 (c)]{Silverman}. 
\end{proof}

\begin{theorem}\label{main}
Let $K$ be an imaginary quadratic field and $\mathcal{O}$ be an order in $K$
with discriminant $D$. Let $Q_1,\,Q_2,\,\ldots,\,Q_h$ 
denote the complete set of reduced binary quadratic forms of discriminant $D$. For a positive integer $N$, 
let
$\gamma_1,\,\gamma_2,\,\ldots,\,\gamma_m$ be 
a complete set of representatives for the left cosets of 
the subgroup $\pm\Gamma_1(N)$ in $\mathrm{SL}_2(\mathbb{Z})$ so that
\begin{equation}\label{disjointunion}
\mathrm{SL}_2(\mathbb{Z})=\gamma_1\Gamma\cupdot\gamma_2\Gamma\cupdot
\cdots\cupdot\gamma_m\Gamma~\textrm{with}~\Gamma=\pm\Gamma_1(N). 
\end{equation}
If $D\neq-3,\,-4$, then the forms
\begin{equation}\label{representatives}
Q_i^{\gamma_k}\quad(1\leq i\leq h,~1\leq k\leq m~\textrm{such that}~Q_i^{\gamma_k}
=Q_i\left(\gamma_k\begin{bmatrix}x\\y\end{bmatrix}\right)\in\mathcal{Q}(D,\,N))
\end{equation}
represent all distinct elements of the extended form class group $\mathcal{C}_N(D)$.
\end{theorem}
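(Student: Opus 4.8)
The plan is to produce a surjection from the indexing set in (\ref{representatives}) onto $\mathcal{C}_N(D)$ and then count. First I would establish surjectivity. Given any class $[Q']\in\mathcal{C}_N(D)$ with $Q'\in\mathcal{Q}(D,\,N)$, the form $Q'$ has discriminant $D$, hence is $\mathrm{SL}_2(\mathbb{Z})$-equivalent (in the classical sense) to one of the reduced forms $Q_i$; that is, $Q'=Q_i^{\gamma}$ for some $\gamma\in\mathrm{SL}_2(\mathbb{Z})$. Writing $\gamma=\gamma_k\delta$ with $\delta\in\Gamma=\pm\Gamma_1(N)$ via (\ref{disjointunion}), we get $Q'=(Q_i^{\gamma_k})^{\delta}$, so $Q'\sim_{\Gamma_1(N)}Q_i^{\gamma_k}$ (the sign $\pm I_2$ acts trivially on forms). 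In particular $Q_i^{\gamma_k}\in\mathcal{Q}(D,\,N)$ since $Q'$ is, so $Q_i^{\gamma_k}$ is one of the forms listed in (\ref{representatives}) and represents $[Q']$. This shows the listed forms exhaust $\mathcal{C}_N(D)$.

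Next I would count, using the cardinality $|\mathcal{C}_N(D)|=|\mathcal{C}_N(\mathcal{O})|$ from Proposition \ref{CDCO} together with the standard formula for the ray class number. The cleaner route, though, is a direct double-coset bijection: I claim the natural map sending the pair $(i,k)$ with $Q_i^{\gamma_k}\in\mathcal{Q}(D,\,N)$ to $[Q_i^{\gamma_k}]$ is \emph{injective} on the indexing set. Suppose $Q_i^{\gamma_k}\sim_{\Gamma_1(N)}Q_j^{\gamma_l}$, say $Q_i^{\gamma_k\delta}=Q_j^{\gamma_l}$ for some $\delta\in\Gamma$. Then $Q_i$ and $Q_j$ are $\mathrm{SL}_2(\mathbb{Z})$-equivalent, and since both are reduced, $i=j$ and, by Lemma \ref{isotropy} (here is where $D\neq-3,-4$ is used), the only ambiguity is $\pm I_2$; hence $\gamma_k\delta\in\{\pm I_2\}\gamma_l$, i.e. $\gamma_k\Gamma=\gamma_l\Gamma$, so $k=l$. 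Thus distinct admissible pairs give distinct classes, and combined with surjectivity the forms in (\ref{representatives}) represent all distinct elements of $\mathcal{C}_N(D)$, each exactly once.

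The main obstacle is the bookkeeping around $\pm I_2$ and the passage between "$\mathrm{SL}_2(\mathbb{Z})$-equivalence of quadratic forms" (which is a priori equivalence up to $\mathrm{GL}_2(\mathbb{Z})$-like subtleties at $D=-3,-4$) and the coset structure mod $\Gamma=\pm\Gamma_1(N)$; Lemma \ref{isotropy} is precisely what tames the isotropy so that reduced representatives are unique and the coset index $k$ is well-defined from the equivalence class. One should also take care that the condition "$Q_i^{\gamma_k}\in\mathcal{Q}(D,\,N)$", i.e. that the leading coefficient of $Q_i^{\gamma_k}$ be prime to $N$, is stable under the argument above — but this is immediate since $\Gamma_1(N)$ preserves $\mathcal{Q}(D,\,N)$ by definition of the action, so admissibility of $(i,k)$ depends only on the class $[Q_i^{\gamma_k}]$. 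No deep input beyond Proposition \ref{CDCO} and Lemma \ref{isotropy} is needed; the argument is essentially the orbit-counting lemma for the $\Gamma$-action on the finite set of $\mathrm{SL}_2(\mathbb{Z})$-orbit representatives.
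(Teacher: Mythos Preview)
Your proposal is correct and follows essentially the same approach as the paper's proof: surjectivity via $\mathrm{SL}_2(\mathbb{Z})$-equivalence of any form to a reduced $Q_i$ followed by the coset decomposition (\ref{disjointunion}), and injectivity via uniqueness of reduced forms together with Lemma~\ref{isotropy} to force $\gamma_k\Gamma=\gamma_l\Gamma$. Your additional remarks on the stability of the admissibility condition under $\Gamma_1(N)$ and on the role of $\pm I_2$ are accurate and simply make explicit what the paper leaves implicit.
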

\begin{proof}
Let $C\in\mathcal{C}_N(D)=\mathcal{Q}(D,\,N)/\sim_{\Gamma_1(N)}$, and so
\begin{equation}\label{CQ}
C=[Q]\quad\textrm{for some}~Q\in\mathcal{Q}(D,\,N). 
\end{equation}
By Remark \ref{diagramremark} (i), we have
\begin{equation}\label{QQ}
Q=Q_i^\alpha\quad\textrm{for some}~\alpha\in\mathrm{SL}_2(\mathbb{Z}). 
\end{equation}
From the coset decomposition (\ref{disjointunion}) of $\mathrm{SL}_2(\mathbb{Z})$, one can express
\begin{equation}\label{agb}
\alpha=\gamma_k\beta\quad\textrm{for some}~1\leq k\leq m~\textrm{and}~\beta\in\pm\Gamma_1(N). 
\end{equation}
Combining (\ref{CQ}), (\ref{QQ}) and (\ref{agb}), we obtain
\begin{equation*}
C=[Q_i^{\gamma_k\beta}]=[(Q_i^{\gamma_k\beta})^{\beta^{-1}}]=[Q_i^{\gamma_k}],  
\end{equation*}
which shows that every class in $\mathcal{C}_N(D)$ is
represented by a form of the type given in (\ref{representatives}). 
\par
Now, suppose that
\begin{equation*}
[Q_i^{\gamma_k}]=[Q_{i'}^{\gamma_{k'}}]\quad\textrm{in}~\mathcal{C}(D,\,N)
\end{equation*}
for some
$1\leq i,\,i'\leq h$ and $1\leq k,\,k'\leq m$ such that 
$Q_i^{\gamma_k},\,Q_{i'}^{\gamma_{k'}}\in\mathcal{Q}(D,\,N)$. Then there exists
$\gamma\in\Gamma_1(N)$ such that
\begin{equation*}
Q_{i'}^{\gamma_{k'}}=(Q_i^{\gamma_k})^\gamma.
\end{equation*}
This implies by Remark \ref{diagramremark} (i) that
$i'=i$ and
\begin{equation*}
Q_i^{\gamma_{k'}}=Q_i^{\gamma_k\gamma}. 
\end{equation*}
Since the isotropy group of $Q_i$ in $\mathrm{SL}_2(\mathbb{Z})$ is $\{\pm I_2\}$
by Lemma \ref{isotropy}, it follows that
\begin{equation*}
\gamma_{k'}=\gamma_k\gamma\quad\textrm{or}\quad
\gamma_{k'}=-\gamma_k\gamma,
\end{equation*}
and hence $\gamma_{k'}\Gamma$ and $\gamma_k\Gamma$ coincide. 
Therefore we conclude by (\ref{disjointunion}) that $k'=k$, showing that 
no two forms given in (\ref{representatives}) 
represent the same class in $\mathcal{C}_N(D)$. This completes the proof. 
\end{proof}

\section {Algorithm for computation}

We shall present a practical algorithm 
for computing the irreducible polynomial
for the special value of a modular function at $\tau_\mathcal{O}$. 
\par
Let $\Gamma(N)$ be the principal congruence subgroup of level $N$. 
Note that $\pm\Gamma(N)$ is normal in $\mathrm{SL}_2(\mathbb{Z})$
and $\pm\Gamma_1(N)=\langle\pm\Gamma(N),\,T\rangle$
with $T=\begin{bmatrix}1&1\\0&1\end{bmatrix}$. 
We simply write $\mathrm{SL}_2(\mathbb{Z})/\pm\Gamma_1(N)$ 
for the set of left cosets of $\pm\Gamma_1(N)$
in $\mathrm{SL}_2(\mathbb{Z})$. 

\begin{lemma}\label{pmG_1}
The following map is a bijection\,\textup{:}
\begin{align*}
\left\{
\begin{bmatrix}a\\c\end{bmatrix}\in\mathbb{Z}^2
~\Bigg|~\gcd(N,\,a,\,c)=1\right\}/\sim_\pm
&\rightarrow\mathrm{SL}_2(\mathbb{Z})/\pm\Gamma_1(N)\\
\left[\begin{bmatrix}a\\c\end{bmatrix}\right]
&\mapsto\left[\textrm{any $\gamma\in\mathrm{SL}_2(\mathbb{Z})$ satisfying}
~\gamma\equiv
\begin{bmatrix}a&\mathrm{*}\\c&\mathrm{*}\end{bmatrix}\Mod{NM_2(\mathbb{Z})}\right],
\end{align*}
where $\sim_\pm$ is the equivalence relation defined by 
\begin{equation*}
\begin{bmatrix}a\\c\end{bmatrix}\sim_\pm
\begin{bmatrix}a'\\c'\end{bmatrix}
\quad\Longleftrightarrow\quad
\begin{bmatrix}a\\c\end{bmatrix}
\equiv\begin{bmatrix}a'\\c'\end{bmatrix}~
\textrm{or}~-\begin{bmatrix}a'\\c'\end{bmatrix}\Mod{N\mathbb{Z}^2}.
\end{equation*}
\end{lemma}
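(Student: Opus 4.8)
The plan is to verify the three standard properties of the displayed map: that it is well defined on $\sim_\pm$-classes, that it is surjective, and that it is injective. Throughout I identify the left-hand set with the set of those classes in $(\mathbb{Z}/N\mathbb{Z})^2$ modulo the sign action for which a representative $(a,c)$ satisfies $\gcd(N,a,c)=1$, which is legitimate since both this coprimality condition and the relation $\sim_\pm$ depend only on $a,c$ modulo $N$.

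\emph{Well-definedness.} The first and only non-formal point is the \emph{lifting step}: given $a,c\in\mathbb{Z}$ with $\gcd(N,a,c)=1$, there exists $\gamma\in\mathrm{SL}_2(\mathbb{Z})$ whose first column is congruent to $\begin{bmatrix}a\\c\end{bmatrix}$ modulo $N$. I would deduce this from the surjectivity of the reduction map $\mathrm{SL}_2(\mathbb{Z})\to\mathrm{SL}_2(\mathbb{Z}/N\mathbb{Z})$ (see, e.g., \cite[Lemma 1.38]{Shimura}): the hypothesis says that $a$ and $c$ generate the unit ideal of $\mathbb{Z}/N\mathbb{Z}$, so one may choose $b,d\in\mathbb{Z}$ with $ad-bc\equiv1\Mod N$, producing a matrix in $\mathrm{SL}_2(\mathbb{Z}/N\mathbb{Z})$ with the prescribed first column, which then lifts. (Alternatively, a direct Chinese-remainder argument over the primes dividing $c$ but not $N$ shows one may adjust $a$ modulo $N$ so that $\gcd(a,c)=1$, and then complete to an $\mathrm{SL}_2(\mathbb{Z})$-matrix.) Next, independence of the choice of $\gamma$: if $\gamma,\gamma'\in\mathrm{SL}_2(\mathbb{Z})$ both have first column $\equiv\begin{bmatrix}a\\c\end{bmatrix}\Mod N$, then $\gamma^{-1}\gamma'$ fixes $\begin{bmatrix}1\\0\end{bmatrix}$ modulo $N$, and since $\det(\gamma^{-1}\gamma')=1$ its lower-right entry is then $\equiv1\Mod N$; hence $\gamma^{-1}\gamma'\in\Gamma_1(N)\subseteq\pm\Gamma_1(N)$ and $[\gamma]=[\gamma']$ in $\mathrm{SL}_2(\mathbb{Z})/\pm\Gamma_1(N)$. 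Finally, compatibility with $\sim_\pm$: if $\begin{bmatrix}a'\\c'\end{bmatrix}\equiv-\begin{bmatrix}a\\c\end{bmatrix}\Mod{N\mathbb{Z}^2}$ and $\gamma$ has first column $\equiv\begin{bmatrix}a\\c\end{bmatrix}$, then $-\gamma=\gamma\cdot(-I_2)$ has first column $\equiv\begin{bmatrix}a'\\c'\end{bmatrix}$ while representing the same left coset as $\gamma$, because $-I_2\in\pm\Gamma_1(N)$.

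\emph{Surjectivity and injectivity.} Surjectivity is immediate: any $\gamma\in\mathrm{SL}_2(\mathbb{Z})$ has first column $\begin{bmatrix}a\\c\end{bmatrix}$ with $\gcd(a,c)=1$, hence $\gcd(N,a,c)=1$, and by construction the class of $\begin{bmatrix}a\\c\end{bmatrix}$ maps to $[\gamma]$. For injectivity, suppose the classes of $\begin{bmatrix}a\\c\end{bmatrix}$ and $\begin{bmatrix}a'\\c'\end{bmatrix}$ have the same image, realized by $\gamma,\gamma'\in\mathrm{SL}_2(\mathbb{Z})$ with $\gamma'=\gamma\delta$ for some $\delta\in\pm\Gamma_1(N)$; write $\delta=\pm\delta_0$ with $\delta_0\in\Gamma_1(N)$, so the first column of $\delta$ is $\equiv\pm\begin{bmatrix}1\\0\end{bmatrix}\Mod N$. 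Then the first column of $\gamma'=\gamma\delta$ equals $\pm$ (first column of $\gamma$) $\equiv\pm\begin{bmatrix}a\\c\end{bmatrix}\Mod N$, and comparing with $\begin{bmatrix}a'\\c'\end{bmatrix}$ yields $\begin{bmatrix}a'\\c'\end{bmatrix}\equiv\pm\begin{bmatrix}a\\c\end{bmatrix}\Mod{N\mathbb{Z}^2}$, i.e. $\begin{bmatrix}a\\c\end{bmatrix}\sim_\pm\begin{bmatrix}a'\\c'\end{bmatrix}$.

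The only ingredient that is not a routine coset manipulation is the lifting step — equivalently the surjectivity of $\mathrm{SL}_2(\mathbb{Z})\to\mathrm{SL}_2(\mathbb{Z}/N\mathbb{Z})$, or in elementary terms the fact that a column vector primitive modulo $N$ lifts to a primitive integral vector — so this is the place where a citation or the short Chinese-remainder argument sketched above is needed, and I expect it to be the main (albeit classical) obstacle.
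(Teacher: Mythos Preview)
Your argument is correct: the three verifications (existence of a lift via surjectivity of $\mathrm{SL}_2(\mathbb{Z})\to\mathrm{SL}_2(\mathbb{Z}/N\mathbb{Z})$, independence of the lift, and the coset comparison for injectivity) are all sound, and the only nontrivial input is indeed the lifting step you flagged. The paper does not actually prove this lemma but simply cites \cite[p.~104]{D-S}; your write-up is essentially the standard argument one finds there, so there is no discrepancy to report.
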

\begin{proof}
See \cite[p. 104]{D-S}. 
\end{proof}

\begin{remark}
There is a one-to-one correspondence between 
$\mathrm{SL}_2(\mathbb{Z})/\pm\Gamma_1(N)$
and the set of inequivalent cusps for $\Gamma(N)$ (see \cite[Proposition 3.8.5]{D-S}). 
\end{remark}

\begin{lemma}\label{semidirect}
The field $K_{\mathcal{O},\,N}$ is Galois over $\mathbb{Q}$, and
its Galois group has the semidirect product decomposition 
\begin{equation*}
\mathrm{Gal}(K_{\mathcal{O},\,N}/\mathbb{Q})=\mathrm{Gal}(K_{\mathcal{O},\,N}/K)
\rtimes\langle\mathfrak{c}\rangle,
\end{equation*} 
where $\mathfrak{c}$ is the complex conjugation restricted to $K_{\mathcal{O},\,N}$ 
\end{lemma}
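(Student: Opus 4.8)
The plan is to establish the two assertions separately: first that $K_{\mathcal{O},N}/\mathbb{Q}$ is Galois, and then that the complex conjugation $\mathfrak{c}$ provides a complement to $\mathrm{Gal}(K_{\mathcal{O},N}/K)$ inside $\mathrm{Gal}(K_{\mathcal{O},N}/\mathbb{Q})$. For the first point, I would argue that $K_{\mathcal{O},N}$ is stable under complex conjugation. By the description $K_{\mathcal{O},N}=K(f(\tau_\mathcal{O})\mid f\in\mathcal{F}_N\text{ finite at }\tau_\mathcal{O})$, it suffices to show that $\overline{f(\tau_\mathcal{O})}$ again lies in $K_{\mathcal{O},N}$ for each such $f$. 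Here one uses that $\overline{\tau_\mathcal{O}}=-\tau_\mathcal{O}-b_\mathcal{O}$ (from $\mathrm{irr}(\tau_\mathcal{O},\mathbb{Q})=x^2+b_\mathcal{O}x+c_\mathcal{O}$), so $\overline{\tau_\mathcal{O}}=T^{-b_\mathcal{O}}(-\overline{\tau_\mathcal{O}})$ lands back in the $\mathrm{GL}_2$-orbit machinery; more precisely $\overline{f(\tau_\mathcal{O})}=\bar f(-\overline{\tau_\mathcal{O}})$ where $\bar f\in\mathcal{F}_N$ is the function obtained by conjugating Fourier coefficients (equivalently $f^{\left[\begin{smallmatrix}1&0\\0&-1\end{smallmatrix}\right]}$), and $-\overline{\tau_\mathcal{O}}=\tau_\mathcal{O}+b_\mathcal{O}$ is $\mathrm{SL}_2(\mathbb{Z})$-equivalent to $\tau_\mathcal{O}$ via $T^{b_\mathcal{O}}$. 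Combining these, $\overline{f(\tau_\mathcal{O})}$ is a value at $\tau_\mathcal{O}$ of a function in $\mathcal{F}_N$ that is finite there, hence lies in $K_{\mathcal{O},N}$. Since $K_{\mathcal{O},N}/K$ is Galois (abelian, by the existence theorem as recalled in Section~\ref{sect2}) and $K/\mathbb{Q}$ is Galois, and $K_{\mathcal{O},N}$ is closed under the nontrivial automorphism of $K/\mathbb{Q}$ extended via conjugation, a standard descent argument shows $K_{\mathcal{O},N}/\mathbb{Q}$ is Galois.

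Next, for the semidirect product decomposition: $\mathfrak{c}$ restricts to a nontrivial element of $\mathrm{Gal}(K_{\mathcal{O},N}/\mathbb{Q})$ of order $2$, since $K\subseteq K_{\mathcal{O},N}$ and $\mathfrak{c}$ acts nontrivially on $K$ (as $K$ is imaginary quadratic). The subgroup $H=\mathrm{Gal}(K_{\mathcal{O},N}/K)$ is normal of index $2$, being the kernel of the restriction $\mathrm{Gal}(K_{\mathcal{O},N}/\mathbb{Q})\to\mathrm{Gal}(K/\mathbb{Q})$. Since $\mathfrak{c}\notin H$ (it moves $K$) and $\langle\mathfrak{c}\rangle$ has order $2$ equal to the index $[\mathrm{Gal}(K_{\mathcal{O},N}/\mathbb{Q}):H]$, we get $\mathrm{Gal}(K_{\mathcal{O},N}/\mathbb{Q})=H\langle\mathfrak{c}\rangle$ with $H\cap\langle\mathfrak{c}\rangle=\{1\}$, which is precisely the semidirect product decomposition $H\rtimes\langle\mathfrak{c}\rangle$.

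I expect the only real content is the first step — verifying that $K_{\mathcal{O},N}$ is stable under complex conjugation — since the group-theoretic half is formal once that is in hand. The subtlety there is bookkeeping the action of the matrix $\left[\begin{smallmatrix}1&0\\0&-1\end{smallmatrix}\right]$ on $\mathcal{F}_N$ (it conjugates Fourier coefficients, consistent with the $\mathrm{GL}_2(\mathbb{Z}/N\mathbb{Z})/\{\pm I_2\}$-action described in Section~\ref{sect4}) and checking that the resulting function $\bar f$ is again finite at the relevant CM point; both follow from the fact that complex conjugation is a field automorphism of $\mathbb{C}$ preserving meromorphy and sending the value $f(\tau_\mathcal{O})$ to $\bar f(\overline{\tau_\mathcal{O}})$. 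Once one identifies $\overline{\tau_\mathcal{O}}$ with a point in the $\mathrm{SL}_2(\mathbb{Z})$-orbit of $\tau_\mathcal{O}$ via a translation by $b_\mathcal{O}$, the finiteness and membership in $K_{\mathcal{O},N}$ are immediate, and the proof concludes.
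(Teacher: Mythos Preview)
Your proposal is correct. The paper does not supply its own proof of this lemma; it simply cites \cite[Lemmas 6.1 and 7.1]{J-K-S-Y23-2}. Your argument is a sound self-contained substitute: the identity $\overline{f(\tau_\mathcal{O})}=\bar f(-\overline{\tau_\mathcal{O}})$ with $\bar f=f^{\left[\begin{smallmatrix}1&0\\0&-1\end{smallmatrix}\right]}\in\mathcal{F}_N$, together with $-\overline{\tau_\mathcal{O}}=\tau_\mathcal{O}+b_\mathcal{O}=T^{b_\mathcal{O}}(\tau_\mathcal{O})$, shows that complex conjugation preserves $K_{\mathcal{O},N}$; the normality of $K_{\mathcal{O},N}/\mathbb{Q}$ then follows from normality of $K_{\mathcal{O},N}/K$ plus stability under the lift of the nontrivial element of $\mathrm{Gal}(K/\mathbb{Q})$, and the semidirect product decomposition is the formal splitting of a normal index-$2$ subgroup by an order-$2$ element outside it. One small point worth making explicit when you write it up: the equality $\bar f(\tau)=\overline{f(-\bar\tau)}$ holds globally on $\mathbb{H}$ by analytic continuation (not just where the $q$-expansion converges), and this is what guarantees finiteness of $\bar f\circ T^{b_\mathcal{O}}$ at $\tau_\mathcal{O}$.
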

\begin{proof}
See \cite[Lemmas 6.1 and 7.1]{J-K-S-Y23-2}. 
\end{proof}

For $\gamma=\begin{bmatrix}d_1&d_2\\d_3&d_4\end{bmatrix}\in\mathrm{SL}_2(\mathbb{Z})$, we denote by 
$\widehat{\gamma}=\begin{bmatrix}d_4&d_2\\d_3&d_1\end{bmatrix}$. 

\begin{lemma}\label{bar}
If $Q\in\mathcal{Q}(D,\,1)$ and $\gamma\in\mathrm{SL}_2(\mathbb{Z})$, then we have
$-\overline{\omega_{Q^\gamma}}=\widehat{\gamma}(-\overline{\omega_Q})$ .
\end{lemma}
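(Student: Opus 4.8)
The plan is to reduce the identity to an elementary $2\times 2$ matrix computation, using the classical correspondence between binary quadratic forms and points of $\mathbb{H}$ together with the fact that fractional linear transformations of $\mathbb{P}^1(\mathbb{C})$ compose according to matrix multiplication, irrespective of determinants. Write $\gamma=\begin{bmatrix}d_1&d_2\\d_3&d_4\end{bmatrix}$, and for a matrix $M=\begin{bmatrix}p&q\\r&s\end{bmatrix}$ let $M\ast z=\frac{pz+q}{rz+s}$ denote the associated fractional linear transformation, defined whenever $rz+s\neq0$; then $(MM')\ast z=M\ast(M'\ast z)$ as long as the relevant denominators are nonzero.

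First I would record the transformation law $\omega_{Q^{\gamma}}=\gamma^{-1}\ast\omega_Q$. Since $Q$ is homogeneous of degree $2$, for $z\in\mathbb{H}$ one has $Q^{\gamma}(z,1)=Q(d_1z+d_2,\,d_3z+d_4)=(d_3z+d_4)^2\,Q(\gamma\ast z,\,1)$, and as $d_3z+d_4\neq0$ on $\mathbb{H}$ this vanishes exactly when $Q(\gamma\ast z,1)=0$. Taking $z=\gamma^{-1}\ast\omega_Q$, which lies in $\mathbb{H}$ because $\gamma^{-1}\in\mathrm{SL}_2(\mathbb{Z})$ preserves $\mathbb{H}$, gives a root of $Q^{\gamma}(x,1)$ in $\mathbb{H}$; since the real quadratic $Q^{\gamma}(x,1)$ has a unique root in $\mathbb{H}$, that root must be $\omega_{Q^{\gamma}}$, whence $\omega_{Q^{\gamma}}=\gamma^{-1}\ast\omega_Q$.

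Next, since $\gamma^{-1}$ has real entries, complex conjugation commutes with $z\mapsto\gamma^{-1}\ast z$, so $\overline{\omega_{Q^{\gamma}}}=\gamma^{-1}\ast\overline{\omega_Q}$. Observing that $z\mapsto-z$ is the transformation attached to $J=\begin{bmatrix}-1&0\\0&1\end{bmatrix}$, we get $-\overline{\omega_{Q^{\gamma}}}=J\ast(\gamma^{-1}\ast\overline{\omega_Q})=(J\gamma^{-1})\ast\overline{\omega_Q}$, while $\widehat{\gamma}(-\overline{\omega_Q})=\widehat{\gamma}\ast(J\ast\overline{\omega_Q})=(\widehat{\gamma}J)\ast\overline{\omega_Q}$. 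A direct computation with $\gamma^{-1}=\begin{bmatrix}d_4&-d_2\\-d_3&d_1\end{bmatrix}$ then shows $J\gamma^{-1}=\begin{bmatrix}-d_4&d_2\\-d_3&d_1\end{bmatrix}=\widehat{\gamma}J$, so the two expressions coincide and the lemma follows.

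There is essentially no obstacle beyond bookkeeping; the one point deserving care is that the auxiliary matrices $J$ and $J\gamma^{-1}$ have determinant $-1$, so the argument should be phrased in terms of fractional linear transformations of $\mathbb{P}^1(\mathbb{C})$ rather than the action of $\mathrm{SL}_2(\mathbb{R})$ on $\mathbb{H}$. As a sanity check one notes $\det\widehat{\gamma}=d_1d_4-d_2d_3=\det\gamma=1$, so $\widehat{\gamma}\in\mathrm{SL}_2(\mathbb{Z})$ and $\widehat{\gamma}(-\overline{\omega_Q})$ indeed lies in $\mathbb{H}$, consistent with $-\overline{\omega_{Q^{\gamma}}}$.
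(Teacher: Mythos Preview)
Your proof is correct and follows essentially the same approach as the paper, which simply records the observation $\omega_{Q^{\gamma}}=\gamma^{-1}(\omega_Q)$ and leaves the remaining (routine) verification implicit; you have supplied exactly those details via the conjugation identity $J\gamma^{-1}=\widehat{\gamma}J$.
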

\begin{proof}
This follows directly from the observation $\omega_{Q^\gamma}=\gamma^{-1}(\omega_Q)$.  
\end{proof}

Combining Theorem \ref{main} with Lemmas \ref{pmG_1}, \ref{semidirect} and \ref{bar}, 
we now describe a simplified algorithm to compute
the irreducible polynomial over $\mathbb{Q}$
of the special value of a modular function at $\tau_\mathcal{O}$. 
This approach refines the earlier algorithm in Cho \cite[Theorem 8]{Cho}. 

\begin{theorem}[Algorithm for computing irreducible polynomials over $\mathbb{Q}$]\label{algorithm}
Let $K$ be an imaginary quadratic field, $\mathcal{O}$ be an order in $K$
of discriminant $D$, and $N$ be a positive integer. Let $f\in\mathcal{F}_N$ and 
$\tau_\mathcal{O}$ be as in \textup{(\ref{tau_O})}. If $D\neq-3,\,-4$ and $f$ is finite at $\tau_\mathcal{O}$, 
then the irreducible polynomial for $f(\tau_\mathcal{O})$ over $\mathbb{Q}$
can be achieved as follows\,\textup{:}
\begin{enumerate}
\item[\textup{Step 1.}] List all reduced binary quadratic forms of discriminant $D$, say,
\begin{equation*}
Q_1,\,Q_2,\,\ldots,\,Q_h. 
\end{equation*}
\item[\textup{Step 2.}] Using \textup{Lemma \ref{pmG_1}}, 
determine a complete set of representatives 
\begin{equation*}
\gamma_1,\,\gamma_2,\,\ldots,\,\gamma_m 
\end{equation*}
for the left cosets of $\pm\Gamma_1(N)$ in $\mathrm{SL}_2(\mathbb{Z})$. 
\item[\textup{Step 3.}] Write
\begin{equation*}
Q_i^{\gamma_k}=a_{i,\,k}\,x^2+b_{i,\,k}\,xy+c_{i,\,k}\,y^2\quad(1\leq i\leq h,~1\leq k\leq m),  
\end{equation*}
and define the set
\begin{equation*}
S=\{(i,\,k)~|~1\leq i\leq h,~1\leq k\leq m,~\gcd(a_{i,\,k},\,N)=1\}. 
\end{equation*}
\item[\textup{Step 4.}] Construct the polynomial 
\begin{equation*}
p(x)=\prod_{(i,\,k)\in S}\left(x-f^{\left[\begin{smallmatrix}
1 & -a_{i,\,k}'(\frac{b_{i,\,k}+b_\mathcal{O}}{2})\\
0 & a_{i,\,k}'
\end{smallmatrix}\right]\widehat{\gamma_k}}(-\overline{\omega_{Q_i}})\right)
\left(x-\overline{f^{\left[\begin{smallmatrix}
1 & -a_{i,\,k}'(\frac{b_{i,\,k}+b_\mathcal{O}}{2})\\
0 & a_{i,\,k}'
\end{smallmatrix}\right]\widehat{\gamma_k}}(-\overline{\omega_{Q_i}})}\,\,\right)
\end{equation*}
in $\mathbb{Q}[x]$, where $a_{i,\,k}'\in\mathbb{Z}$ satisfies $a_{i,\,k}a_{i,\,k}'\equiv1\Mod{N}$
and $b_\mathcal{O}$ is as in \textup{(\ref{irr(t)})}. 
\item[\textup{Step 5.}] 
Conclude that 
\begin{equation*}
p(x)=\mathrm{irr}(f(\tau_\mathcal{O}),\,\mathbb{Q})^\ell\quad\textrm{for some positive integer $\ell$}.
\end{equation*}
That is, $\mathrm{irr}(f(\tau_\mathcal{O}),\,\mathbb{Q})$ is the unique monic irreducible factor of $p(x)$. 
\end{enumerate}
\end{theorem}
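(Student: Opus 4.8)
The plan is to show that $p(x)$ is, up to a power, the irreducible polynomial of $f(\tau_\mathcal{O})$ over $\mathbb{Q}$ by identifying its roots with the full Galois orbit of $f(\tau_\mathcal{O})$ under $\mathrm{Gal}(K_{\mathcal{O},\,N}/\mathbb{Q})$. First I would invoke Theorem \ref{main}: since $D\neq-3,\,-4$, the forms $Q_i^{\gamma_k}$ with $(i,k)\in S$ (i.e.\ those lying in $\mathcal{Q}(D,\,N)$, which is exactly the condition $\gcd(a_{i,k},N)=1$) represent \emph{all distinct} elements of $\mathcal{C}_N(D)\cong\mathrm{Gal}(K_{\mathcal{O},\,N}/K)$. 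Next I would apply the isomorphism of Proposition in Section \ref{sect4}: the class $[Q_i^{\gamma_k}]=[a_{i,k}x^2+b_{i,k}xy+c_{i,k}y^2]$ acts on $f(\tau_\mathcal{O})$ by sending it to $f^{\left[\begin{smallmatrix}1&-a_{i,k}'(\frac{b_{i,k}+b_\mathcal{O}}{2})\\0&a_{i,k}'\end{smallmatrix}\right]}(-\overline{\omega_{Q_i^{\gamma_k}}})$. The only discrepancy with Step 4 is that the evaluation point there is written $-\overline{\omega_{Q_i}}$ with an extra matrix $\widehat{\gamma_k}$ applied to the modular function; Lemma \ref{bar} resolves this, since $-\overline{\omega_{Q_i^{\gamma_k}}}=\widehat{\gamma_k}(-\overline{\omega_{Q_i}})$, and the $\mathrm{GL}_2$-action satisfies $f^\alpha(\widehat{\gamma_k}\tau)=(f^{\alpha})\circ\widehat{\gamma_k}(\tau)$ with $\widehat{\gamma_k}\in\mathrm{SL}_2(\mathbb{Z})$ acting as $f\mapsto f\circ\widehat{\gamma_k}=f^{\widehat{\gamma_k}}$. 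Hence the first factor of each pair in $p(x)$ runs precisely over the $\mathrm{Gal}(K_{\mathcal{O},\,N}/K)$-conjugates of $f(\tau_\mathcal{O})$, each appearing exactly once.

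Second, I would bring in the complex conjugation. By Lemma \ref{semidirect}, $K_{\mathcal{O},\,N}/\mathbb{Q}$ is Galois with $\mathrm{Gal}(K_{\mathcal{O},\,N}/\mathbb{Q})=\mathrm{Gal}(K_{\mathcal{O},\,N}/K)\rtimes\langle\mathfrak{c}\rangle$. Therefore the full set of $\mathrm{Gal}(K_{\mathcal{O},\,N}/\mathbb{Q})$-conjugates of $f(\tau_\mathcal{O})$ is obtained by taking each of the $K$-conjugates $\sigma(f(\tau_\mathcal{O}))$ and adjoining $\mathfrak{c}(\sigma(f(\tau_\mathcal{O})))=\overline{\sigma(f(\tau_\mathcal{O}))}$. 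This is exactly what the second factor $\bigl(x-\overline{f^{\cdots\widehat{\gamma_k}}(-\overline{\omega_{Q_i}})}\bigr)$ in Step 4 contributes. Consequently, the multiset of roots of $p(x)$ is precisely the multiset $\{\,\rho(f(\tau_\mathcal{O})) : \rho\in\mathrm{Gal}(K_{\mathcal{O},\,N}/\mathbb{Q})\,\}$, with each element of the actual Galois orbit occurring the same number of times $\ell=[K_{\mathcal{O},\,N}:\mathbb{Q}]/\deg\mathrm{irr}(f(\tau_\mathcal{O}),\mathbb{Q})$ (the size of the stabilizer). Since $p(x)$ has coefficients that are symmetric functions of a Galois-stable multiset, $p(x)\in\mathbb{Q}[x]$, and by the standard correspondence between Galois orbits and irreducible factors we get $p(x)=\mathrm{irr}(f(\tau_\mathcal{O}),\mathbb{Q})^\ell$; the uniqueness of the monic irreducible factor is then immediate.

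The main obstacle I expect is the bookkeeping in the first step: carefully verifying that the evaluation point $-\overline{\omega_{Q_i^{\gamma_k}}}$ produced by the Section \ref{sect4} proposition matches $\widehat{\gamma_k}(-\overline{\omega_{Q_i}})$ \emph{and} that the accompanying upper-triangular matrix is the one attached to the \emph{primitive} form $Q_i^{\gamma_k}$ (with leading coefficient $a_{i,k}$ and middle coefficient $b_{i,k}$), not to $Q_i$. One must check that pulling the $\widehat{\gamma_k}$ out through the $\mathrm{GL}_2(\mathbb{Z}/N\mathbb{Z})/\{\pm I_2\}$-action is legitimate — that is, that $f^{\alpha\widehat{\gamma_k}}(-\overline{\omega_{Q_i}})=f^{\alpha}(-\overline{\omega_{Q_i^{\gamma_k}}})$ where $\alpha=\left[\begin{smallmatrix}1&-a_{i,k}'(\frac{b_{i,k}+b_\mathcal{O}}{2})\\0&a_{i,k}'\end{smallmatrix}\right]$ — which rests on the compatibility of the $\mathrm{SL}_2$-part of the action with genuine fractional linear transformations on $\mathbb{H}$ together with Lemma \ref{bar}. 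A secondary, more minor point is confirming that the index set $S$ genuinely picks out all of $\mathcal{Q}(D,\,N)$-classes: the condition $\gcd(a_{i,k},N)=1$ is precisely the defining condition for membership in $\mathcal{Q}(D,\,N)$, so $\{Q_i^{\gamma_k}:(i,k)\in S\}$ is the list furnished by Theorem \ref{main}, and no class is missed or double-counted. Once these identifications are pinned down, the rest is the routine Galois-theoretic argument sketched above.
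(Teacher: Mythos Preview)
Your proposal is correct and follows exactly the route the paper intends: the theorem is stated in the paper as a direct consequence of combining Theorem~\ref{main}, Lemma~\ref{pmG_1}, Lemma~\ref{semidirect}, and Lemma~\ref{bar} (together with the Section~\ref{sect4} isomorphism), and you have faithfully unpacked that combination. In particular, your handling of the key bookkeeping point---that $f^{\alpha}(-\overline{\omega_{Q_i^{\gamma_k}}})=f^{\alpha\widehat{\gamma_k}}(-\overline{\omega_{Q_i}})$ via Lemma~\ref{bar} and the compatibility of the $\mathrm{SL}_2$-part of the $\mathrm{GL}_2(\mathbb{Z}/N\mathbb{Z})$-action with fractional linear transformations---is exactly what the paper's terse ``combining'' is asking the reader to supply.
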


\begin{remark}\label{real}
If $f(\tau_\mathcal{O})$ is real, then 
\begin{equation*}
[\mathbb{Q}(f(\tau_\mathcal{O})):\mathbb{Q}]=\frac{[K(f(\tau_\mathcal{O})):\mathbb{Q}(f(\tau_\mathcal{O}))]\cdot
[\mathbb{Q}(f(\tau_\mathcal{O})):\mathbb{Q}]}{[K:\mathbb{Q}]}
=[K(f(\tau_\mathcal{O})):K].
\end{equation*}
In this case, the irreducible polynomial for $f(\tau_\mathcal{O})$
over $\mathbb{Q}$ coincides with that over $K$. 
Thus, the polynomial $p(x)$ in Step 4 of Theorem \ref{algorithm}
may be replaced by 
\begin{equation*}
p(x)=\prod_{(i,\,k)\in S}\left(x-f^{\left[\begin{smallmatrix}
1 & -a_{i,\,k}'(\frac{b_{i,\,k}+b_\mathcal{O}}{2})\\
0 & a_{i,\,k}'
\end{smallmatrix}\right]\widehat{\gamma_k}}(-\overline{\omega_{Q_i}})\right).
\end{equation*}
\end{remark}

\section {An example}

In this final section, we shall give a concrete example by
applying the algorithm of Theorem \ref{algorithm} to the Rogers-Ramanujan continued fraction. 

\begin{example}\label{Example}
As shown by Rogers \cite{Rogers}, 
the Rogers-Ramanujan continued fraction $r$ can be written as
\begin{equation}\label{Rogers}
r(\tau)=q^{1/5}\prod_{n=1}^\infty\frac{(1-q^{5n-1})(1-q^{5n-4})}{(1-q^{5n-2})(1-q^{5n-3})}\quad(\tau\in\mathbb{H}).  
\end{equation}
Observe that $r$ has rational Fourier coefficients.  
Consider the case where $D=-52$ and $N=5$. 
In this setting, $\mathcal{O}$ is the order in
the imaginary quadratic field $K=\mathbb{Q}(\sqrt{-13})$ with conductor $\ell_\mathcal{O}=2$, and $\tau_\mathcal{O}=\sqrt{-13}$.  
Since $j(\tau_\mathcal{O})$ is an algebraic integer (see \cite[Theorem 4 in Chapter 5]{Lang})
and $x=r(\tau_\mathcal{O})$ 
satisfies the icosahedral equation (\ref{icosahedral}),  
$r(\tau_\mathcal{O})$ is a unit whose irreducible polynomial over $\mathbb{Q}$ 
has integer coefficients. 
We now apply the steps outlined in Theorem \ref{algorithm}. 
\begin{enumerate}
\item[Step 1.]
There are exactly two reduced forms of discriminant $D=-52$\,:
\begin{equation*}
Q_1=x^2+13y^2\quad\textrm{and}\quad Q_2=2x^2+2xy+7y^2. 
\end{equation*}
\item[Step 2.]
Using Lemma \ref{pmG_1}, we determine a complete set of representatives 
of $\pm\Gamma_1(N)$ in $\mathrm{SL}_2(\mathbb{Z})$ as follows\,:
\begin{equation*}
\begin{array}{llll}
\gamma_1=\begin{bmatrix}0&-1\\1&0\end{bmatrix},
&\gamma_2=\begin{bmatrix}5&-3\\2&-1\end{bmatrix},
&\gamma_3=\begin{bmatrix}1&0\\0&1\end{bmatrix},
&\gamma_4=\begin{bmatrix}1&0\\1&1\end{bmatrix},\vspace{0.1cm}\\
\gamma_5=\begin{bmatrix}1&0\\2&1\end{bmatrix},
&\gamma_6=\begin{bmatrix}1&0\\3&1\end{bmatrix},
&\gamma_7=\begin{bmatrix}1&0\\4&1\end{bmatrix},
&\gamma_8=\begin{bmatrix}2&-1\\5&-2\end{bmatrix},\vspace{0.1cm}\\
\gamma_9=\begin{bmatrix}2&-1\\1&0\end{bmatrix},
&\gamma_{10}=\begin{bmatrix}2&-1\\7&-3\end{bmatrix},
&\gamma_{11}=\begin{bmatrix}2&1\\3&2\end{bmatrix},
&\gamma_{12}=\begin{bmatrix}2&-1\\9&-4\end{bmatrix}.
\end{array}
\end{equation*}
\item[Step 3.] We find that
{\small\begin{equation*}
\begin{array}{lll}
Q_1^{\gamma_1}=13x^2+y^2, & Q_1^{\gamma_2}=77x^2-82xy+22y^2, & Q_1^{\gamma_3}=x^2+13y^2,\\
Q_1^{\gamma_4}=14x^2+26xy+13y^2, & Q_1^{\gamma_5}=53x^2+52xy+13y^2, & Q_1^{\gamma_6}=118x^2+78xy+13y^2,\\
Q_1^{\gamma_7}=209x^2+104xy+13y^2, & Q_1^{\gamma_8}=329x^2-264xy+53y^2, & 
Q_1^{\gamma_9}=17x^2-4xy+y^2,\\
Q_1^{\gamma_{10}}=641x^2-550xy+118y^2, & Q_1^{\gamma_{11}}=121x^2+160xy+53y^2, & 
Q_1^{\gamma_{12}}=1057x^2-940xy+209y^2,\\
Q_2^{\gamma_1}=7x^2-2xy+2y^2, & Q_2^{\gamma_2}=98x^2-110xy+31y^2, & Q_2^{\gamma_3}=2x^2+2xy+7y^2,\\
Q_2^{\gamma_4}=11x^2+16xy+7y^2, & Q_2^{\gamma_5}=34x^2+30xy+7y^2, & 
Q_2^{\gamma_6}=71x^2+44xy+7y^2,\\
Q_2^{\gamma_7}=122x^2+58xy+7y^2, & 
Q_2^{\gamma_8}=203x^2-166xy+34y^2, & 
Q_2^{\gamma_9}=19x^2-10xy+2y^2,\\
Q_2^{\gamma_{10}}=379x^2-328xy+71y^2, & 
Q_2^{\gamma_{11}}=83x^2+106xy+34y^2, & 
Q_2^{\gamma_{12}}=611x^2-546xy+122y^2.
\end{array}
\end{equation*}}
Moreover, we justify that every pair $(i,\,k)$ with $1\leq i\leq 2$ and $1\leq k\leq 12$ satisfies
$\gcd(a_{i,\,k},\,N)=1$, and hence
\begin{equation*}
S=\{(i,\,k)~|~1\leq i\leq 2,~1\leq k\leq12\}.  
\end{equation*}
\item[Step 4.]
Since $r(\tau_\mathcal{O})=r(\sqrt{-13})$ is real, 
we may simplify the polynomial $p(x)$ as in 
Remark \ref{real}\,:
\begin{equation*}
p(x)=\prod_{(i,\,k)\in S}\left(x-r^{\left[\begin{smallmatrix}
1 & -a_{i,\,k}'(\frac{b_{i,\,k}+b_\mathcal{O}}{2})\\
0 & a_{i,\,k}'
\end{smallmatrix}\right]\widehat{\gamma_k}}(-\overline{\omega_{Q_i}})\right).
\end{equation*}
And, we compute that
\begin{align*}
p(x)=&\prod_{(i,\,k)\in S}\left(x-r^{\left[\begin{smallmatrix}
1 & -a_{i,\,k}'(\frac{b_{i,\,k}+b_\mathcal{O}}{2})\\
0 & 1
\end{smallmatrix}\right]\widehat{\gamma_k}}(-\overline{\omega_{Q_i}})\right)\\
&\hspace{5cm}\textrm{because $r$ has rational Fourier coefficients}\\
=&\prod_{(i,\,k)\in S}\left(x-r\left(\gamma_{i,\,k}(-\overline{\omega_{Q_i}})\right)\right)
\quad\textrm{where $\gamma_{i,\,k}$ is any matrix in $\mathrm{SL}_2(\mathbb{Z})$ such that}\\
&\hspace{5cm}\gamma_{i,\,k}\equiv
\left[\begin{smallmatrix}
1 & -a_{i,\,k}'(\frac{b_{i,\,k}+b_\mathcal{O}}{2})\\
0 & 1
\end{smallmatrix}\right]\widehat{\gamma_k}\Mod{5M_2(\mathbb{Z})}\\
&\hspace{5cm}\textrm{because $r$ is modular for $\Gamma(5)$}\\
=&\left(x-r\left(\left[\begin{smallmatrix}0&-1\\1&0\end{smallmatrix}\right](\sqrt{-13})\right)\right)
\left(x-r\left(\left[\begin{smallmatrix}-5&-8\\-3&-5\end{smallmatrix}\right](\sqrt{-13})\right)\right)
\left(x-r\left(\left[\begin{smallmatrix}1&0\\0&1\end{smallmatrix}\right](\sqrt{-13})\right)\right)\\
&\times\left(x-r\left(\left[\begin{smallmatrix}-1&-2\\1&1\end{smallmatrix}\right](\sqrt{-13})\right)\right)
\left(x-r\left(\left[\begin{smallmatrix}-3&-2\\2&1\end{smallmatrix}\right](\sqrt{-13})\right)\right)
\left(x-r\left(\left[\begin{smallmatrix}-3&2\\-2&1\end{smallmatrix}\right](\sqrt{-13})\right)\right)\\
&\times\left(x-r\left(\left[\begin{smallmatrix}-1&2\\-1&1\end{smallmatrix}\right](\sqrt{-13})\right)\right)
\left(x-r\left(\left[\begin{smallmatrix}3&-5\\5&-8\end{smallmatrix}\right](\sqrt{-13})\right)\right)
\left(x-r\left(\left[\begin{smallmatrix}1&1\\1&2\end{smallmatrix}\right]\sqrt{-13})\right)\right)\\
&\times\left(x-r\left(\left[\begin{smallmatrix}2&-1\\-3&2\end{smallmatrix}\right]\sqrt{-13})\right)\right)
\left(x-r\left(\left[\begin{smallmatrix}2&1\\3&2\end{smallmatrix}\right](\sqrt{-13})\right)\right)
\left(x-r\left(\left[\begin{smallmatrix}1&-1\\-1&2\end{smallmatrix}\right](\sqrt{-13})\right)\right)\\
&\times\left(x-r\left(\left[\begin{smallmatrix}-2&-1\\1&0\end{smallmatrix}\right](\tfrac{1+\sqrt{-13}}{2})\right)\right)
\left(x-r\left(\left[\begin{smallmatrix}-1&2\\2&-5\end{smallmatrix}\right](\tfrac{1+\sqrt{-13}}{2})\right)\right)
\left(x-r\left(\left[\begin{smallmatrix}1&2\\0&1\end{smallmatrix}\right](\tfrac{1+\sqrt{-13}}{2})\right)\right)\\
&\times\left(x-r\left(\left[\begin{smallmatrix}-2&-3\\1&1\end{smallmatrix}\right](\tfrac{1+\sqrt{-13}}{2})\right)\right)
\left(x-r\left(\left[\begin{smallmatrix}1&0\\2&1\end{smallmatrix}\right](\tfrac{1+\sqrt{-13}}{2})\right)\right)
\left(x-r\left(\left[\begin{smallmatrix}5&-2\\-2&1\end{smallmatrix}\right](\tfrac{1+\sqrt{-13}}{2})\right)\right)\\
&\times\left(x-r\left(\left[\begin{smallmatrix}-2&3\\-1&1\end{smallmatrix}\right](\tfrac{1+\sqrt{-13}}{2})\right)\right)
\left(x-r\left(\left[\begin{smallmatrix}-2&1\\-5&2\end{smallmatrix}\right](\tfrac{1+\sqrt{-13}}{2})\right)\right)
\left(x-r\left(\left[\begin{smallmatrix}0&-1\\1&2\end{smallmatrix}\right](\tfrac{1+\sqrt{-13}}{2})\right)\right)\\
&\times\left(x-r\left(\left[\begin{smallmatrix}-1&1\\-3&2\end{smallmatrix}\right](\tfrac{1+\sqrt{-13}}{2})\right)\right)
\left(x-r\left(\left[\begin{smallmatrix}-1&-1\\-2&-3\end{smallmatrix}\right](\tfrac{1+\sqrt{-13}}{2})\right)\right)
\left(x-r\left(\left[\begin{smallmatrix}-2&5\\-1&2\end{smallmatrix}\right](\tfrac{1+\sqrt{-13}}{2})\right)\right)\\
=&~x^{24}
+82\,x^{23}
-996\,x^{22}
+968\,x^{21}
+1051\,x^{20}
+1422\,x^{19}
-96\,x^{18}\\
&-24912\,x^{17}
+7896\,x^{16}
+16722\,x^{15}
+28844\,x^{14}
+13658\,x^{13}
-114024\,x^{12}\\
&-13658\,x^{11}
+28844\,x^{10}
-16722\,x^{9}
+7896\,x^{8}
+24912\,x^{7}
-96\,x^{6}\\
&-1422\,x^{5}
+1051\,x^{4}
-968\,x^{3}
-996\,x^{2}
-82\,x
+1\quad\textrm{by (\ref{Rogers}) and numerical estimation}, 
\end{align*}
which shows that $r(\tau_\mathcal{O})$ is a unit as desired. 
\item[Step 5.] The above $p(x)$ 
turns out to be irreducible over $\mathbb{Q}$, and hence we get
$\mathrm{irr}(r(\tau_\mathcal{O}),\,\mathbb{Q})=p(x)$. 
\end{enumerate}
\end{example}

\begin{remark}
\begin{enumerate}
\item[(i)] For $\mathbf{r}=\begin{bmatrix}r_1 & r_2\end{bmatrix}\in M_{1,\,2}(\mathbb{Q})\setminus
M_{1,\,2}(\mathbb{Z})$, the Klein form $\mathfrak{k}_\mathbf{r}$ is defined by the infinite product
\begin{equation*}
\mathfrak{k}_\mathbf{r}(\tau)=
e^{\pi\mathrm{i}r_2(r_1-1)}q^{r_1(r_1-1)/2}(1-q_z)\prod_{n=1}^\infty(1-q^nq_z)(1-q^nq_z^{-1})(1-q^n)^{-2}
\quad(\tau\in\mathbb{H}), 
\end{equation*}
where $q_z=e^{2\pi\mathrm{i}z}$ with $z=r_1\tau+r_2$. 
Modularity and other fundamental properties are established in 
\cite[Sections 2.1 and 3.7]{K-L}. 
By comparing with
(\ref{Rogers}), one deduces the identity
\begin{equation}\label{relation}
r(\tau)=\zeta_5\prod_{m=0}^4\frac{\mathfrak{k}_{\left[\begin{smallmatrix}
\frac{1}{5} & \frac{m}{5}\end{smallmatrix}\right]}(\tau)}
{\mathfrak{k}_{\left[\begin{smallmatrix}\frac{2}{5} & \frac{m}{5}\end{smallmatrix}\right]}(\tau)}
=\frac{\mathfrak{k}_{\left[\begin{smallmatrix}\frac{1}{5} & 0\end{smallmatrix}\right]}(5\tau)}
{\mathfrak{k}_{\left[\begin{smallmatrix}\frac{2}{5} & 0\end{smallmatrix}\right]}(5\tau)}\quad
(\tau\in\mathbb{H}). 
\end{equation}
\item[(ii)] For an alternative approach to estimating the Rogers-Ramanujan continued fraction, see
\cite{G-H}. 
\end{enumerate}
\end{remark}

\section*{ Acknowledgements}

The Maple code used to perform the computations in Example \ref{Example} is available on the corresponding author's website\,:
\url{https://sites.google.com/site/mathdsyoon/home/research}.

\bibliographystyle{amsplain}

\address{
Department of Mathematical Sciences \\
KAIST \\
Daejeon 34141\\
Republic of Korea} {jkgoo@kaist.ac.kr}
\address{
Department of Mathematics\\
Hankuk University of Foreign Studies\\
Yongin-si, Gyeonggi-do 17035\\
Republic of Korea} {dhshin@hufs.ac.kr}
\address{ 
Department of Mathematics Education\\
Pusan National University\\
Busan 46241\\Republic of Korea}
{dsyoon@pusan.ac.kr}
\end{document}